\DeclareSymbolFontAlphabet{\mathbb}{AMSb}
\DeclareSymbolFontAlphabet{\mathbbl}{bbold}
\def\N{\mathbb N}
\def\R{\mathbb R}
\def\U{\mathbf{U}}
\def\D{\mathcal{D}}
\def\<{\langle}
\def\>{\rangle}
\def\lsim{\lesssim}
\def\wt{\widetilde}
\def\Chi{\raise .3ex \hbox{\large $\chi$}} 
\def\ov{\overline}
\def\normal{\mathbf{n}}
\def\la{\lambda}
\newcommand{\IPbracket}[2]{\langle #1,#2\rangle}
\def\dsp{\displaystyle}
\def\x{{\bf x}}
\def\U{{\bf U}}
\def\p{{\bf p}}
\def\cells{{\mathcal{M}}}
\def\faces{{\mathcal{F}}}
\def\nodes{{\mathcal{V}}}
\def\edges{{\mathcal{E}}}
\def\cF{\mathcal{F}}
\def\x{{\bf x}}
\def\dsp{{\displaystyle x}}
\def\dsp{\displaystyle}
\def\bu{\mathbf{u}}
\def\bv{\mathbf{v}}
\def\bw{\mathbf{w}}
\newcommand{\bs}[1]{\mathbf{#1}}
\newcommand{\UD}{\mathbf{U}_\D}
\newcommand{\UDz}{\mathbf{U}_{0,\D}}
\newcommand{\Poly}[1]{\mathbb{P}^{#1}}
\newcommand{\Polygras}[1]{\mathbf{\mathbb{P}}^{#1}}
\newcommand{\Sigmaspace}{\mathbf{\Sigma}}    
\newcommand{\IUD}{\mathcal I_{\UDz}} 
\newcommand{\NORM}[2]{\|#2\|_{#1}}
\newcommand{\SEMINORM}[2]{|#2|_{#1}}
\newcommand{\Ks}{{Ks}}
\def\Ksig{{K\!\sigma}}
\newcommand{\sige}{{\sigma e}}
\def\bbsig{\bbsigma}
\def\bbeps{\bbespilon}
\newcommand{\jump}[1]{\llbracket #1 \rrbracket}
\newcommand{\email}[1]{\href{mailto:#1}{#1}}
\theoremstyle:=definition,remark,plain\do{%
        \expandafter\g@addto@macro\csname th@\theoremstyle\endcsname{%
            \addtolength\thm@preskip\parskip
            }%
        }
\newtheorem{theorem}{Theorem}
\numberwithin{theorem}{section}
\newtheorem{lemma}[theorem]{Lemma}
\theoremstyle{remark}
\newtheorem{remark}[theorem]{Remark}
\theoremstyle{definition}
\newtheorem{definition}[theorem]{Definition}
\newtheorem{example}[theorem]{Example}
\def\grad{\nabla}
\def\div{{\rm \textbf{div}}}
\def\divScal{{\rm div}}
\definecolor{labelkey}{rgb}{0.6,0,1}
\def\thm@space@setup{%
  \thm@preskip=\parskip \thm@postskip=0pt
}
\newcounter{corr}
\definecolor{violet}{rgb}{0.580,0.,0.827}
\newcommand{\corr}[3]{\typeout{Warning : a correction remains in page \thepage}
  \stepcounter{corr}
 	      {\color{blue}\ifmmode\text{\,\sout{\ensuremath{#1}}\,}\else\sout{#1}\fi}
              {\color{red}#2}
              {\color{violet} #3}
}
\begin{document}
\title{A locking-free nodal-based polytopal method for linear elasticity}

\author[1,2]{{J\'er\^ome Droniou}\footnote{\email{jerome.droniou@cnrs.fr}}}
\author[1]{{Raman Kumar}\footnote{ \email{raman.kumar@umontpellier.fr}}}
\affil[1]{IMAG, Univ. Montpellier, CNRS, Montpellier, France.}%
\affil[2]{School of Mathematics, Monash University, Australia}%

\date{}
\maketitle
\vspace{-.5cm}
{\footnotesize{
\centering
}}

\begin{abstract}
This work presents a Discrete de Rham (DDR) numerical scheme for solving linear elasticity problems on general polyhedral meshes, with a focus on preventing volumetric locking in the quasi-incompressible regime. The method is formulated as a nodal-based approach using the lowest-order gradient space of the DDR complex, enriched with scalar face bubble degrees of freedom that effectively capture the normal flux across element faces. This face-bubble enrichment is crucial for ensuring sufficient approximation flexibility of the divergence field, thereby eliminating the {volumetric locking} phenomenon that typically occurs as the Lam\'e parameter $\la$ approaches infinity. We establish $H^1$-error estimates that are independent of $\la\ge 0$, and depend only on the lower bound of $\mu$, guaranteeing robustness across the entire range from compressible to nearly incompressible regimes. We also show how to adapt our scheme to the frictionless contact mechanics model, maintaining a locking-free   estimate for the primal variable~(displacement). Numerical experiments confirm that the proposed {locking-free} method delivers accurate and stable approximations on general polytopal discretizations, even when the material behaves as an incompressible medium. The flexibility and robustness of this approach make it a practical alternative to mixed formulations for engineering applications involving nearly incompressible elastic materials.
  
\bigskip
\textbf{Keywords:} linear elasticity, polytopal method, locking-free method, bubble stabilisation, error estimates, discrete Korn inequality. 
\end{abstract}

%

\section{Introduction}
Volumetric locking is a phenomenon associated with numerical techniques for elasticity models, and describe the ill-behaviour of the scheme when the second Lamé coefficient $\la$ becomes large. For $\la \to+\infty$, that is, when the elastic material is nearly incompressible, it is well known that the lowest-order nodal-based finite element methods provide poor convergence rates for the displacement, see \cite{MR1174468}. The underlying reason is that the corresponding discrete space is not reach enough in divergence-free functions, the limit $\la\to+\infty$ formally imposing a zero divergence on the displacement.

To overcome the effects of the {volumetric locking}, several numerical approaches have been designed over the years. One approach is to apply the mixed finite element methods~(FEMs), where the model is reformulated into a model that involves both displacement and pressure variables. The advantage of these mixed techniques is that they provide a good convergence rate for both displacement and pressure, see \cite{brezzi2012mixed}; however, a well-known limitation of mixed finite element discretisations is the need to satisfy the inf-sup condition. In our present work, we focus exclusively on {Poisson locking} in situations where the primary concern is the accuracy of the displacement, and not that of the pressure variable. Our main objective is to design a {robust} nodal numerical method with scalar {bubble} enrichment formulated in the primal~(displacement) form, as opposed to the mixed formulation. By {robust}, we mean a method that performs well and provides accurate results over a practical range of mesh discretizations, remaining stable and effective even when the material parameter approaches its limiting value. One of the interests of considering a primal form is that it naturally provides displacements that are more regular than in mixed formulations (e.g., they are continuous at the vertices). Moreover, we also aim at designing a scheme that is applicable to more generic meshes than classical finite elements.

In recent years, a variety of polytopal methods -- capable of operating on general polygonal and polyhedral meshes -- have been developed, including Discontinuous Galerkin Method \cite{hansbo-larson}, Discrete de Rham (DDR) method~\cite{ddr-variant}, Hybrid High Order (HHO) Method~\cite{hho-book}, Hybrid Mimetic Mixed Methods \cite{dipietro-lemaire}, and Virtual Element Method (VEM)~\cite{beirao-brezzi-marini, da2015virtual}. These approaches have rapidly gained considerable attention within the numerical analysis community owing to their flexibility in handling complex geometries and their ability to provide accurate approximations for problems involving high-regularity solutions.

For a two-dimensional linear elasticity problem, a lowest-order robust nodal VEM has been designed, see \cite{Tang-2020, Huang-2023}. In these references, the robustness is achieved by adding vector-valued degrees of freedom~(DOFs) at the edge midpoints, representing full displacements. A similar approach is adopted in \cite{Huang-2025} for the 3-dimensional elasticity model, where an additional vector-valued DOF is added at a point in each face, and the considered virtual functions are piecewise linear on a sub-triangulation based on that point. It is worth noting that the additional nodal displacement unknowns are added to enrich the space of divergences of the virtual functions (thus achieving robustness); adding a full displacement unknown to only enrich the divergence, however, does not appear to be the most cost-effective choice. Recently, {volumetric locking-free} mixed VEMs for two-dimensional frictionless contact problems are discussed \cite{lovadina2026volumetric}, employing a mixed displacement/pressure formulation to prevent {volumetric locking}. Their analysis yields explicit error estimates suitable for meshes with small edges, and numerical experiments on both first- and second-order schemes confirm theoretical predictions and robustness properties.

In this work, we design a DDR numerical scheme for both two and three-dimensional versions of the linear elasticity problem. We start from a nodal (vertex-based) formulation based on the first component of the DDR complex and propose to enrich the discrete space, together with the associated operators, by introducing additional {scalar} face-bubble unknowns. These unknowns are designed to represent normal fluxes across faces and thus provide a targeted enriched approximation of the divergence of the displacement field, without having to enrich the whole displacement on the face. This additional unknown is crucial to ensure that the discrete space remains sufficiently flexible in the quasi-incompressible limit, thereby preventing {volumetric locking}. An {optimal} order error estimate is derived for an $H^1$-like norm, the key feature being its independence on the second Lam\'e coefficient. It should be noted that,  polytopal or not,    {face}-based schemes are often naturally {locking-free} \cite{dipietro-ern,dipietro-lemaire}, the drawback being that they provide approximate solutions that are less regular than nodal-based methods \cite{crouzeix1973conforming}.  Moreover, we also show that the proposed {locking-free} approach extends to contact mechanics with Tresca friction on fractured media. The {bubble} enrichment strategy naturally combines with mixed formulations for the frictionless contact model designed in \cite{MR4886366}, maintaining the robustness properties in the quasi-incompressible limit for the primal variable~(displacement).

{Layout of the article:} In Section~\ref{sec:scheme}, we start with the formulation of the linear elasticity model~(Subsection~\ref{sec:model}), then present the proposed DDR scheme, detailing the polytopal mesh structure and the discrete spaces with nodal and scalar face-bubble degrees of freedom~(Subsection~\ref{subsec:mesh}), the gradient and displacement reconstruction operators on faces and cells~(Subsection~\ref{subsec:operators}), the interpolator (Subsection~\ref{subsec:interpolator}), and the discrete variational formulation with stabilization (Subsection~\ref{subsec:mixed}). The main theoretical result is presented in Section~\ref{sec:results}, namely, an error estimate in a discrete $H^1$-norm with a constant that does not depend on the second Lamé coefficient, which guarantees the robustness in the incompressible limit. Section~\ref{sec:proofs} contains the supporting analysis, including a discrete Korn inequality (Theorem~\ref{th:korn}), commutation property (Lemma~\ref{lm:com}), and detailed derivations of the abstract error and consistency estimates (Subsection~\ref{sec:proof.abstract.error}). Section~\ref{Tresca_friction_extension}  extends the method to frictionless contact mechanics model, confirming that the {locking-free} properties extend to contact problems for the primal variable~(displacement). Moreover, extension to Tresca contact is briefly covered in Remark~\ref{rem:tresca}. Finally, in Section~\ref{sec:numerics}, the simulations were done with the HArDCore3D code, and linked to the public repository: https://github.com/jdroniou/HArDCore3D-release. These numerical experiments confirm the accuracy, robustness, and {locking-free} performance of the proposed method on general polyhedral meshes. 
\section{Model and scheme}\label{sec:scheme}

This section introduces the Discrete de Rham (DDR) numerical scheme for linear elasticity on polyhedral meshes, including the continuous model formulation, discrete spaces with face-bubble enrichment, reconstruction operators, the interpolator, and the discrete variational formulation with stabilization.

\subsection{Model}\label{sec:model}
Let $\Omega \subset \mathbb{R}^{d}$, $d\in \{2,3\}$, denote a bounded connected polytopal domain with boundary $\partial\Omega$. We consider the linear elasticity problem which consists in searching for the displacement $\bu:\Omega\to\R^d$ such that
\begin{equation}
\label{eq:model.strong} 
\left\{\!\!\!\!
\begin{array}{lll}
& -\div \bbsigma(\bu)= \mathbf{f}  & \mbox{ on } \Omega,\\[1ex]
& \bbsigma(\bu)=2\mu\bbeps(\bu)+\la (\divScal\, \bu )\mathbb{I} & \mbox{ on }  \Omega,\\[1ex]
&\bu={\bf 0} & \mbox{ on } \partial\Omega,
\end{array}
\right.
\end{equation}
where $\mu$ and $\la$ are the Lam\'e coefficients satisfying $\mu \in [\mu_{1},\mu_{2}]$ with $0<\mu_{1}\leq\mu_{2}$ and $\la \in (0,\infty)$, $\bbeps(\bu):= \frac{1}{2}(\nabla\bu+\prescript{t}{}{\nabla\bu})$ is the symmetric gradient operator, $\mathbb{I}$ is the $d\times d$ identity matrix, and $\mathbf{f}\in\mathbf{L}^2(\Omega)=L^{2}(\Omega)^{d}$ represents the external forces. 

The weak formulation of problem \eqref{eq:model.strong} consists in finding displacement $\bu \in \U_0:=\{{\bf v}\in \mathbf{H}^{1}(\Omega):\; {\bf v}={\bf 0}\; \text{on}\; \partial \Omega\}$ such that, for all $\bv \in \U_0$, 
\begin{equation}\label{Lagrange_meca_contactfriction_1}
		 \dsp \int_\Omega \bbsig( \bu): \bbeps( \bv)  
		\dsp =  \int_\Omega \mathbf{f}\cdot \bv.
\end{equation}

For $X \subset \bar{\Omega}$, we denote by $(\cdot, \cdot)_{X}$ and $\NORM{X}{\cdot}$ the standard inner product and norm in  $L^2(X)$, respectively.
An analogous notation is used for $\mathbf{L}^2(X)$ and $L^{2}(X)^{d\times d}$. 

\subsection{Mesh and spaces}\label{subsec:mesh}

We consider a polytopal mesh as in \cite{ddr-variant}.
The set of cells $K$, the set of faces $\sigma$,  the set of nodes $s$, and the set of edges $e$ are denoted respectively by  $\cells$, $\faces$, $\nodes$, and $\edges$. Let $ \nodes^{\text{ext}}$ (resp.\ $ \faces^{\text{ext}}$) and $\nodes^{\text{int}}$ (resp.\ $\faces^{\text{int}}$) be respectively the sets of exterior and interior nodes (resp. faces) of $\nodes$ (resp.\ $\faces$) with respect to the domain $\Omega$. For each $K\in\cells$, $\faces_K$ denotes the set of faces of $K$ and, if $\sigma\in\faces_K$, $\normal_{\Ksig}$ is the outer unit normal to $K$ on $\sigma$. The set of nodes of $X\in\cells\cup\faces$ is denoted by $\nodes_X$, and the set of (one or two) cells adjacent to a given face $\sigma$ is $\cells_{\sigma}$. The coordinate vector of a node $s\in\nodes$ is denoted by $\x_s$. For each cell $K$ and face $\sigma$, let $h_K$ be the cell diameter, and $|K|$ and $|\sigma|$ their respective measures. The mesh size is defined by
$h=\max_{K\in \cells}h_{K}$.

Each $\sigma\in\faces$ is endowed with a fixed unit normal $\normal_{\sigma}$, which determines its orientation, and we denote by $w_{\Ksig}=\pm 1$ the relative orientation of $\sigma$ with respect to $K\in\cells_\sigma$; this is the number such that $w_{\Ksig} \normal_{\Ksig}=\normal_{\sigma}$.
For each $\sigma \in \faces$ we denote by $\gamma^{\sigma}$ the trace operator on $\sigma$ for functions in $H^1(K)$ (or their vector-valued versions). 

Throughout this paper, we suppose that the mesh regularity assumptions of \cite[Definition 1.9]{hho-book} hold, and we write $a \lsim b$ to denote $a \le C b$, where $C > 0$ depends only on $\Omega$, on the mesh regularity parameter, and $\mathbf{f}$, but is independent of the Lam\'e coefficients $\mu$ and $\lambda$. 

If $X\in\cells\cup\faces$ and $\ell\in\N$, we denote by $\Poly{\ell}(X)$ the space of polynomials of total degree $\le \ell$ on $X$. We use the notation $\Poly{\ell}(\cells)$ for the space of piecewise-polynomials of degree $\le \ell$ on $\cells$.

The discrete space is built on the (vector-valued) lowest-order gradient space of the Discrete De Rham (DDR) method \cite{ddr-variant}, which will give us access, in the sections below, to the corresponding gradient and displacement reconstructions. To ensure the robustness in the quasi-incompressible limit, we, however, add to this space face unknowns:
\begin{equation*}
	\begin{aligned}\UD = \Big\{\bv_\D={}&((\bv_{s})_{s\in\nodes},(v_{\sigma})_{\,\sigma\in \faces})\,: \bv_s\in\R^d,\;v_{\sigma}\in \R\Big\}. 
	\end{aligned}
\end{equation*}
The interpolator defined in \eqref{eq:def.ID} below exposes the meaning behind these values: the value $\bv_s$ represents a displacement at node $s$, while the face-bubble value $v_\sigma$ is a correction of the normal displacement to the face computed by averaging the nodal displacements.
To take into account homogeneous Dirichlet boundary conditions on $\partial\Omega$, we also consider the subspace
\begin{equation*}
  \begin{aligned}
  \UDz = \Big\{\bv_\D \in \UD: \bv_{s}=\mathbf{0}\quad\forall s\in\nodes^{\text{ext}}\,,\; v_{\sigma}=0\quad\forall \sigma\in\faces^{\text{ext}}\Big\}.
  \end{aligned}
\end{equation*}

\subsection{Reconstruction operators in $\UDz$}\label{subsec:operators}

We introduce here two reconstruction operators on each face and cell of the mesh; the first one reconstruct a gradient, the second is a displacement. These reconstructions are inspired by those in the lowest-order DDR method. At this order and disregarding the role of the scalar face unknowns, they also correspond to the standard projections in the Virtual Element Method \cite{Coulet.ea:20}; we also note that the addition of (vector) face DOFs to a (virtual element-like) fully discrete method has already been considered in \cite{droniou2023bubble,MR4886366}, but only along certain faces and to handle mechanical models in fractured media, not as a tool to recover a full {locking-free} feature.

Let $\sigma\in\faces$, and select nonnegative weights $(\omega_s^\sigma)_{s\in\nodes_\sigma}$ to express the center of mass $\overline{\mathbf{x}}_\sigma$ of $\sigma$ as a convex combination of its vertices:
\begin{equation*}
\overline{\mathbf{x}}_{\sigma} = \sum_{s \in \nodes_{\sigma} } \omega_{s}^{\sigma} \x_s\,,\quad
\sum_{s \in \nodes_{\sigma} } \omega_{s}^{\sigma}=1.
\end{equation*}
Then, the tangential face gradient $\nabla^\sigma:\UDz\to \Poly{0}(\sigma)^{d\times d}$ and tangential displacement reconstruction $\Pi^{\sigma}:\UDz\to \Polygras{1}(\sigma)^d$ are defined by: for all $\bv_\D\in\UDz$, 
\begin{equation*}
\begin{aligned}
\nabla^{\sigma} \bv_{\D} ={}& \frac{1}{|\sigma|} \sum_{e= s_1 s_2 \in \edges_{\sigma}} |e| {\mathbf{v}_{s_1}+\mathbf{v}_{s_2} \over 2} \otimes \normal_\sige,\\
\Pi^{\sigma} \bv_{\D}(\mathbf{x})  ={}& \nabla^{\sigma} \bv_{\D} (\mathbf{x} - \overline{\mathbf{x}}_{\sigma}) + \overline{\bv}_{\sigma}\quad\forall\mathbf{x}\in\sigma,\quad\mbox{ where } \overline{\bv}_{\sigma}=\sum_{s \in \nodes_{\sigma} } \omega_{s}^{\sigma}\bv_{s}.
\end{aligned}
\end{equation*}
In the definition above, $\edges_\sigma$ is the set of edges of $\sigma$ and $\normal_\sige$ is the unit normal vector to $e \in \edges_{\sigma}$ in the plane $\sigma$ oriented outward from $\sigma$. We write $e=s_1s_2$ to indicate that the vertices of $e$ are $s_1$ and $s_2$. The symbol $\otimes$ represents the tensor product of two vectors: $\mathbf{a}\otimes\mathbf{b}$ is the matrix such that $ (\mathbf{a}\otimes\mathbf{b})_{i,j}=\mathbf{a}_i\mathbf{b}_j$. We note in passing the following formula:
\begin{equation}\label{eq:average.Pisigma}
\frac{1}{|\sigma|}\int_\sigma \Pi^{\sigma} \bv_{\D}=\overline{\bv}_\sigma.
\end{equation}

Similarly, for each cell $K\in\cells$, we introduce nonnegative weights $(\omega_s^K)_{s\in\nodes_K}$ such that 
$$
\overline{\mathbf{x}}_K=\sum_{s\in\nodes_K}\omega_s^K\x_s\,,\quad 
\sum_{s \in \nodes_K } \omega_{s}^K=1,
$$
and we define the gradient reconstruction $\nabla^K:\UDz\to\Poly{0}(K)^{d\times d}$ and the displacement reconstruction $\Pi^K:\UDz\to\Poly{1}(K)^d$ by: for all $\bv_\D\in\UDz$,
\begin{equation}
\begin{aligned}
  \label{eq:def.nablaK}
  \nabla^{K} \bv_{\D}  ={}& \frac{1}{|K|} \sum_{\sigma \in \faces_{K}}  |\sigma| \overline{\bv}_{\sigma} \otimes \normal_\Ksig + \frac{1}{|K|} \sum_{\sigma \in \faces_{K}}  |\sigma| {v}_{\sigma} \normal_\sigma \otimes \normal_\Ksig, \\
  \Pi^{K} \bv_{\D}(\mathbf{x})  ={}& \nabla^{K} \bv_\D (\mathbf{x} - \overline{\mathbf{x}}_{K}) + \overline{\bv}_{K}\quad\forall\mathbf{x}\in K\,,\quad
  \mbox{ where }\overline{\bv}_{K} = \sum_{s \in \nodes_K } \omega_{s}^K \bv_{s}.
\end{aligned}
\end{equation}

Patching these local operators, we obtain their global (discontinuous but piecewise polynomial) counterparts $\nabla^\D:\UDz\to\Poly{0}(\cells)^{d\times d}$ and $\Pi^\D:\UDz\to\Poly{1}(\cells)^d$. We also define a piecewise constant displacement reconstruction operator $\wt{\Pi}^{\D}:\UDz\to\Poly{0}(\cells)^d$ by projection on piecewise constant functions. We therefore set: for all $\bv_\D\in\UDz$ and all $K\in\cells$,
\[
(\nabla^\D\bv_\D)_{|K}=\nabla^K\bv_\D\,,\quad
(\Pi^\D\bv_\D)_{|K}=\Pi^K\bv_\D\,,\quad
(\wt{\Pi}^\D\bv_\D)_{|K}=\overline{\bv}_K.
\]
Finally, the discrete symmetric gradient $\bbeps_\D$, divergence $\divScal_\D$, and stress tensor $\bbsigma_\D$ are defined by
\begin{alignat}{2}
\bbeps_{\D}&=\frac{1}{2}\left(\nabla^{\D}+ \prescript{t}{}{\nabla^{\D}}\right),\nonumber \\
\divScal_{\D} &= \text{Tr}\left( \bbeps_{\D}\right)=\frac{1}{|K|} \sum_{\sigma \in \faces_{K}}  |\sigma| \overline{\bv}_{\sigma} \cdot \normal_\Ksig + \frac{1}{|K|} \sum_{\sigma \in \faces_{K}}  |\sigma| {w}_\Ksig {v}_{\sigma}, \label{div_def_1} \\ 
\bbsigma_{\D}(\cdot)&=2\mu\bbeps_{\D}(\cdot) + \la \divScal_{\D}(\cdot) \mathbb{I}, \label{bbsig_def}
\end{alignat}
where $\text{Tr}$ is the matrix trace operator.
\subsection{Interpolator}\label{subsec:interpolator}
The space $\mathbf{\mathcal{C}}^0_0(\overline{\Omega})$ is spanned by the continuous functions $\overline{\Omega}\to\R^d$ that vanish on $\partial\Omega$. The interpolator $\IUD:\mathbf{\mathcal{C}}^0_0(\overline{\Omega})\to\UDz$ is defined by setting, for $\bv\in \mathbf{\mathcal{C}}^0_0(\overline{\Omega})$,
\begin{equation}\label{eq:def.ID}
  \begin{aligned}
  (\IUD \bv )_{s}={}&\bv(\x_s)&&\quad \forall s \in \nodes,\\
  (\IUD \bv)_{\sigma}={}&\frac{1}{|\sigma|} \int_{\sigma} \left( \gamma^{\sigma} \bv - \Pi^{\sigma}(\IUD\bv) \right)\cdot \normal_\sigma &&\quad\forall\sigma \in \faces.
  \end{aligned}
\end{equation}
Using \eqref{eq:average.Pisigma}, the second relation in \eqref{eq:def.ID} can be recast
\begin{equation*}
(\IUD \bv)_{\sigma}=\frac{1}{|\sigma|} \int_{\sigma} \gamma^{\sigma} \bv\cdot\normal_\sigma - \sum_{s\in\nodes_\sigma}\omega_s^\sigma \bv(\x_s)\cdot \normal_\sigma,
\end{equation*}
which shows that $(\IUD \bv)_{\sigma}$ is well-defined despite its apparent self-reference to $\IUD\bv$ in \eqref{eq:def.ID}.

\subsection{Variational formulation}\label{subsec:mixed}
We now introduce the numerical scheme for the variational formulation of 
\eqref{Lagrange_meca_contactfriction_1}: Find $ \bu_{\D} \in \UDz $ such that, for all $\bv_{\D} \in \UDz$,
\begin{equation}
\label{mixed_discrete}
\int_{\Omega} \bbsigma_{\D} (\bu_{\D}): \bbeps_{\D}(\bv_{\D}) + \mu_1 S_{\D}( \bu_{\D}, \bv_{\D})  = \int_\Omega  \mathbf{f} \cdot \wt \Pi^{\D} \bv_{\D}. 
\end{equation}
Here, the stabilisation bilinear form $S_{\D}$ is defined as
\begin{equation*}
S_{\D}( \bu_{\D}, \bv_{\D}) = \sum_{K\in\cells}S_K(\bu_\D,\bv_D)
\end{equation*}
with local stabilisation bilinear form $S_K:\UDz\times\UDz \to\R$ given by
\begin{equation}\label{eq:def.SK}
\begin{aligned}
S_K(\bu_\D,\bv_\D)={}& h_{K}^{d-2}  \sum_{s \in \nodes_K} \left(\bu_{\Ks} -\Pi^{K}\bu_{\D} (\x_s) \right) \cdot \left( \bv_{\Ks} -\Pi^{K}\bv_{\D}  (\x_s) \right)+ h_{K}^{d-2} \sum_{\sigma \in \faces_{K}}u_{\sigma}v_{\sigma}.
\end{aligned}
\end{equation}

\section{Main results}\label{sec:results}
In this section, we state the well-posedness of the scheme \eqref{mixed_discrete}, and the corresponding error estimates.
The error estimates will be stated in the following discrete $H^1$-seminorm.

\begin{definition}[Discrete $H^1$-like semi-norm on $\UD$]
The semi-norm $\NORM{1,\D}{{\cdot}}$ on $\UD$ is defined by: for all $\bu_\D\in \UD$,
\begin{equation}\label{eq:def.normD}
\NORM{1,\D}{\bu_\D}=\left(\sum_{K\in\cells}\NORM{1,K}{\bu_\D}^2\right)^{1/2}
\mbox{ with } \NORM{1,K}{\bu_\D}=\left(\NORM{L^2(K)}{\nabla^K\bu_\D}^2+S_K(\bu_\D,\bu_\D)\right)^{1/2},
\end{equation}
where $\nabla^K$ is defined by \eqref{eq:def.nablaK} and $S_K$ is given by \eqref{eq:def.SK}. Restricted to $\UDz$, $\NORM{1,\D}{{\cdot}}$ is actually a norm.
\end{definition}

To state the error estimates, we introduce the following notations:
\begin{itemize}
\item The (primal) consistency error is: for $\bv_\D\in\UDz$,
\begin{equation}\label{def:CD}
	C_{\D}(\bu,\bv_{\D}) = \left(\NORM{L^2(\Omega)}{\grad \bu - \grad^{\D} \bv_{\D}}^2 +  S_{\D}( \bv_{\D},  \bv_{\D}) \right)^{1/2}.
\end{equation}
\item Define $\Sigmaspace=\mathbf{H}_{\divScal}(\Omega; \mathcal{S}^d(\R))$,
 where $\mathcal{S}^d(\R)$ is the space of symmetric $d\times d$ real matrices. The adjoint consistency error (or limit-conformity measure) is then given, for $\bbsigma\in\Sigmaspace$, by 
\begin{equation}\label{def:WD}
\begin{aligned}
\mathcal{W}_{\D}(\bbsigma) ={}& \sup_{\bv_\D\in\UDz}\frac{w_{\D}(\bbsigma,\bv_{\D})}{\NORM{1,\D}{\bv_\D}},\\
\mbox{where } w_{\D}(\bbsigma,\bv_{\D}) ={}&\int_{\Omega} \bbsigma: \bbeps_{\D}(\bv_{\D} )+ \int_{\Omega}   \wt{\Pi}^{\D}\bv_{\D}\cdot \div \bbsigma.
\end{aligned}
\end{equation}
\end{itemize}

\begin{theorem}[Abstract error estimate]\label{abstr.err.est}
The numerical scheme \eqref{mixed_discrete} has a unique solution  $\bu_{\D} \in \UDz$ and, if $\bu$ is the solution of \eqref{Lagrange_meca_contactfriction_1}, we have the following abstract error estimate:	
\begin{equation}\label{estimat_error}
\begin{aligned}
\mu_1\NORM{L^2(\Omega)}{\grad^{\D}\bu_\D - \grad \bu} &\lsim   \mathcal{W}_{\D}(\bbsigma(\bu)) +\mu_{2}C_\D(\bu,\IUD\bu),
\end{aligned}
\end{equation}
where we recall that the hidden constant in $\lesssim$ is independent of the Lamé coefficients $\mu$ and $\la$.
\end{theorem}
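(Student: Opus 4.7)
The plan is to run a standard Third Strang-type argument, with the crucial non-trivial ingredient being the commutation property of Lemma~\ref{lm:com} which will absorb the $\lambda$-dependent contribution and yield the announced robust estimate. Write $a_\D(\bu_\D,\bv_\D):=\int_\Omega \bbsigma_\D(\bu_\D):\bbeps_\D(\bv_\D)+\mu_1 S_\D(\bu_\D,\bv_\D)$ for the bilinear form on the left of \eqref{mixed_discrete}. Since the discrete problem is a square linear system, existence and uniqueness follow from coercivity, and for this I would combine the obvious bound $a_\D(\bv_\D,\bv_\D)\ge 2\mu_1\|\bbeps_\D(\bv_\D)\|_{L^2(\Omega)}^2+\mu_1 S_\D(\bv_\D,\bv_\D)$ with the discrete Korn inequality (Theorem~\ref{th:korn}) to get $a_\D(\bv_\D,\bv_\D)\gtrsim \mu_1\NORM{1,\D}{\bv_\D}^2$ for every $\bv_\D\in\UDz$. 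This is the first place where the $\mu_1$-only dependence of the final estimate originates.

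Next I would derive the error equation. Setting $\hat{\bu}_\D:=\IUD\bu$, I use $\mathbf{f}=-\div\bbsigma(\bu)$ and the definition \eqref{def:WD} of $w_\D$ to rewrite the right-hand side of \eqref{mixed_discrete} as $\int_\Omega\mathbf{f}\cdot\wt\Pi^\D\bv_\D=\int_\Omega\bbsigma(\bu):\bbeps_\D(\bv_\D)-w_\D(\bbsigma(\bu),\bv_\D)$. Subtracting $a_\D(\hat{\bu}_\D,\bv_\D)$ from both sides of \eqref{mixed_discrete} then yields, for every $\bv_\D\in\UDz$,
\begin{equation*}
a_\D(\bu_\D-\hat{\bu}_\D,\bv_\D)=\int_\Omega\bigl(\bbsigma(\bu)-\bbsigma_\D(\hat{\bu}_\D)\bigr):\bbeps_\D(\bv_\D)-w_\D(\bbsigma(\bu),\bv_\D)-\mu_1 S_\D(\hat{\bu}_\D,\bv_\D).
\end{equation*}

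The delicate step is the first term on the right. Splitting the stresses via \eqref{bbsig_def} gives
\begin{equation*}
\bigl(\bbsigma(\bu)-\bbsigma_\D(\hat{\bu}_\D)\bigr):\bbeps_\D(\bv_\D)=2\mu\bigl(\bbeps(\bu)-\bbeps_\D(\hat{\bu}_\D)\bigr):\bbeps_\D(\bv_\D)+\la\bigl(\divScal\,\bu-\divScal_\D\hat{\bu}_\D\bigr)\,\divScal_\D\bv_\D,
\end{equation*}
and here I would invoke the commutation property (Lemma~\ref{lm:com}) to identify $\divScal_\D(\IUD\bu)$ with the piecewise-constant $L^2$-projection of $\divScal\,\bu$ on $\cells$; since $\divScal_\D\bv_\D\in\Poly{0}(\cells)$ by \eqref{div_def_1}, the $\la$-term then vanishes cellwise upon integration, which is precisely the locking-free mechanism. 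The remaining symmetric-gradient contribution is controlled by $2\mu_2$ times $\|\bbeps(\bu)-\bbeps_\D(\hat{\bu}_\D)\|_{L^2(\Omega)}\|\bbeps_\D(\bv_\D)\|_{L^2(\Omega)}\le C_\D(\bu,\IUD\bu)\NORM{1,\D}{\bv_\D}$ by \eqref{def:CD} and the fact that $\bbeps_\D$ is the symmetric part of $\nabla^\D$.

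To conclude, I would bound the two remaining terms by Cauchy--Schwarz: $|w_\D(\bbsigma(\bu),\bv_\D)|\le\mathcal{W}_\D(\bbsigma(\bu))\NORM{1,\D}{\bv_\D}$ directly from \eqref{def:WD}, and $|\mu_1 S_\D(\hat{\bu}_\D,\bv_\D)|\le\mu_1 S_\D(\hat{\bu}_\D,\hat{\bu}_\D)^{1/2}S_\D(\bv_\D,\bv_\D)^{1/2}\lsim\mu_1 C_\D(\bu,\IUD\bu)\NORM{1,\D}{\bv_\D}$. Testing the error equation with $\bv_\D=\bu_\D-\hat{\bu}_\D$, using coercivity on the left, and dividing through by $\NORM{1,\D}{\bu_\D-\hat{\bu}_\D}$ produces $\mu_1\NORM{1,\D}{\bu_\D-\hat{\bu}_\D}\lsim\mathcal{W}_\D(\bbsigma(\bu))+\mu_2 C_\D(\bu,\IUD\bu)$ (absorbing $\mu_1\le\mu_2$). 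A triangle inequality $\|\grad\bu-\grad^\D\bu_\D\|_{L^2(\Omega)}\le\|\grad\bu-\grad^\D\hat{\bu}_\D\|_{L^2(\Omega)}+\NORM{1,\D}{\bu_\D-\hat{\bu}_\D}\le C_\D(\bu,\IUD\bu)+\NORM{1,\D}{\bu_\D-\hat{\bu}_\D}$, multiplied by $\mu_1$, gives \eqref{estimat_error}. The main obstacles are external to this proof (the Korn inequality of Theorem~\ref{th:korn}, which underlies the $\mu_1$-only coercivity, and the commutation identity of Lemma~\ref{lm:com}, which is the entire reason the $\la$-term disappears and the scheme is locking-free); once those are granted, the remainder is a bookkeeping Strang argument.
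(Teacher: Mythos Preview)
Your proposal is correct and follows essentially the same route as the paper's own proof: coercivity via the discrete Korn inequality (Theorem~\ref{th:korn}), derivation of an error equation testing with $\bu_\D-\IUD\bu$, elimination of the $\lambda$-term through the commutation property (Lemma~\ref{lm:com}), Cauchy--Schwarz bounds on the remaining pieces, and a final triangle inequality. The only differences are cosmetic (the paper introduces the energy norm $\NORM{e,\D}{\cdot}$ explicitly and organises the subtraction in a slightly different order), but the logical structure and the key mechanisms are identical.
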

\begin{proof}
See Section \ref{sec:proof.abstract.error}.
\end{proof}

In the following theorem, we denote by $\bs{H}^2(\cells)$ the space of vector-valued functions defined on $\Omega$ that are $\bs{H}^2$ on each $K\in\cells$. This space is endowed with its usual broken semi-norm. 

\begin{theorem}[Error estimate]\label{error_estimate}
 Let $\bu$ be the solution to \eqref{Lagrange_meca_contactfriction_1} and assume that $\bu \in \bs{H}^2(\cells)$. Then, the solution $\bu_{\D}$ of \eqref{mixed_discrete} satisfies the following error estimate:
$$
\mu_1\NORM{L^2(\Omega)}{\grad^{\D}\bu_\D - \grad \bu} \lsim h(\mu_2\SEMINORM{H^2(\cells)}{\bu}+\la\SEMINORM{H^1(\cells)}{\divScal\bu}), 
$$
where the coefficient hidden in $\lsim$ is independent of Lam\'e coefficients $\mu$ and $\la$. 
\end{theorem}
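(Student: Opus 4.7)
The starting point is the abstract error estimate of Theorem~\ref{abstr.err.est}: it suffices to show that $C_\D(\bu,\IUD\bu)\lsim h|\bu|_{\bs{H}^2(\cells)}$ and that $\mathcal{W}_\D(\bbsigma(\bu))\lsim h\mu_2|\bu|_{\bs{H}^2(\cells)}+h\la|\divScal\bu|_{H^1(\cells)}$; plugging both bounds into~\eqref{estimat_error} and using $\mu_1\le\mu_2$ then delivers the stated inequality.

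For the primal consistency $C_\D(\bu,\IUD\bu)$, I would first invoke the commutation property of Lemma~\ref{lm:com} to identify $\nabla^\D\IUD\bu$ with an $L^2$-projection of $\nabla\bu$, up to face-bubble corrections that are themselves $L^2$-projections of the normal trace on faces. Standard polynomial approximation on star-shaped polytopes, legitimised by the mesh regularity assumption, then yields $\|\nabla\bu-\nabla^K\IUD\bu\|_{L^2(K)}\lsim h_K|\bu|_{\bs{H}^2(K)}$ cellwise. For the stabilisation contribution~\eqref{eq:def.SK}, each summand is estimated separately: the nodal residuals $\bu(\x_s)-\Pi^K\IUD\bu(\x_s)$ are pointwise errors of an affine reconstruction controllable by $h_K^{2-d/2}|\bu|_{\bs{H}^2(K)}$ through a trace--embedding argument (so that the weight $h_K^{d-2}$ yields $h_K^2|\bu|^2_{\bs{H}^2(K)}$ per node); and the face unknowns $(\IUD\bu)_\sigma$ are, by~\eqref{eq:def.ID}, the $L^2(\sigma)$-mean of the deviation of the normal trace of $\bu$ from its affine reconstruction on $\sigma$, of the same order. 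Squaring and summing over the cells produces the desired bound.

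For the limit-conformity $\mathcal{W}_\D(\bbsigma(\bu))$, I would split $\bbsigma(\bu)=2\mu\bbeps(\bu)+\la(\divScal\bu)\mathbb{I}$ and treat both components with the same template. Fix $\bv_\D\in\UDz$ with $\|\bv_\D\|_{1,\D}\le 1$, and on each cell $K$ subtract from $\bbsigma$ its $L^2$-mean $\bbsigma_K^0$: the error introduced in $\int_K\bbsigma:\bbeps_\D(\bv_\D)$ is bounded via Cauchy--Schwarz and Poincar\'e--Wirtinger by $h_K|\bbsigma|_{H^1(K)}\|\nabla^K\bv_\D\|_{L^2(K)}$. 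The remaining piece $\int_K\bbsigma_K^0:\bbeps_\D(\bv_\D)+\int_K\wt\Pi^\D\bv_\D\cdot\div\bbsigma$ collapses, thanks to the explicit formulas~\eqref{eq:def.nablaK}--\eqref{div_def_1} and a cellwise integration by parts, into a sum of face residuals of the form $(\overline{\bv}_\sigma-\Pi^K\bv_\D(\overline{\mathbf{x}}_\sigma))$ and $v_\sigma$, which are absorbed in the face part of $\|\bv_\D\|_{1,\D}$ defined through \eqref{eq:def.SK}. Applied to the two halves of $\bbsigma(\bu)$, together with the pointwise bounds $|2\mu\bbeps(\bu)|_{H^1(K)}\lsim\mu_2|\bu|_{\bs{H}^2(K)}$ and $|\la(\divScal\bu)\mathbb{I}|_{H^1(K)}\le\la|\divScal\bu|_{H^1(K)}$, this produces the claimed limit-conformity estimate.

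The decisive step -- and the main obstacle -- is retaining the full factor $h$ in the $\la$-part: naively, $\int_K\la(\divScal\bu)\,\divScal_\D\bv_\D$ can only be bounded by $\la\|\divScal\bu\|_{L^2}\|\bv_\D\|_{1,\D}$, which would spoil the $\la$-robustness. What rescues the argument is precisely the scalar face-bubble enrichment: a short direct computation from~\eqref{eq:def.ID} and~\eqref{div_def_1} (and a manifestation of the commutation Lemma~\ref{lm:com}) yields $\divScal_\D\IUD\bu=\pi^0_\cells(\divScal\bu)$, so that $\divScal_\D$ is a genuine constant-per-cell approximation of the divergence. Consequently, replacing $\divScal\bu$ by $\pi^0_K(\divScal\bu)$ inside $\int_K(\divScal\bu)\,\divScal_\D\bv_\D$ comes ``for free'', and the remainder gains the missing power of $h_K$ by Poincar\'e--Wirtinger applied to $\divScal\bu$. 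Assembling the two estimates in the abstract inequality~\eqref{estimat_error} concludes.
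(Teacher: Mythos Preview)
Your overall plan---invoke the abstract estimate~\eqref{estimat_error} and then bound $C_\D(\bu,\IUD\bu)$ and $\mathcal W_\D(\bbsigma(\bu))$ separately---is exactly what the paper does (Lemmas~\ref{lem:consistency.nabla} and~\ref{lem:adjoint.consistency}). Two points deserve correction, however.

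First, a minor one: Lemma~\ref{lm:com} is a commutation for the \emph{divergence}, not the gradient, so it cannot be used to identify $\nabla^\D\IUD\bu$ with $\pi^0_\cells(\nabla\bu)$. The consistency of $\nabla^K\IUD$ is a separate (standard) approximation result; the paper simply refers to \cite[Theorem 5.8]{MR4886366}.

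Second, and more substantially, your last paragraph misplaces the role of the face bubbles. The commutation $\divScal_\D\IUD\bu=\pi^0_\cells(\divScal\bu)$ has \emph{already} been consumed in the proof of the abstract estimate (Theorem~\ref{abstr.err.est}), where it kills the term $\la\int_\Omega(\divScal\bu-\divScal_\D\IUD\bu)\,\mathbb I:\bbeps_\D(\cdot)$. Once you start from~\eqref{estimat_error}, no further appeal to bubbles or commutation is needed: the bound $\mathcal W_\D(\bbsigma(\bu))\lsim h|\bbsigma(\bu)|_{H^1(\cells)}$ holds for \emph{any} $\bbsigma\in\Sigmaspace$ with $H^1$ cell regularity, independently of the face-bubble enrichment. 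In the paper (Lemma~\ref{lem:adjoint.consistency}) this is obtained by expanding $\nabla^K\bv_\D$ via~\eqref{eq:def.nablaK}, introducing the cell average $\bbtau_K$ and face averages $\mathbf\tau_{K\sigma}$ of $\bbsigma(\bu)\normal_{K\sigma}$, and exploiting the cross-face cancellations $\mathbf\tau_{K\sigma}+\mathbf\tau_{L\sigma}=0$ together with the single-valuedness of $\overline{\bv}_\sigma$ and $v_\sigma$ (which you do not mention but which is the heart of the argument). What remains is $\sum_{K,\sigma}|\sigma|(\overline{\bv}_\sigma-\overline{\bv}_K)\cdot(\mathbf\tau_{K\sigma}-\bbtau_K\normal_{K\sigma})$ plus the analogous $v_\sigma$ term, bounded by Cauchy--Schwarz and $\sum_{K,\sigma}|\sigma|h_K|\mathbf\tau_{K\sigma}-\bbtau_K\normal_{K\sigma}|^2\lsim h^2|\bbsigma(\bu)|_{H^1(\cells)}^2$. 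Only at this last step does one split $|\bbsigma(\bu)|_{H^1}\lsim \mu_2|\bu|_{H^2}+\la|\divScal\bu|_{H^1}$. So the ``decisive step'' you describe is not needed here; the $\la$-robustness of $\mathcal W_\D$ comes from the inherent $O(h)$ nature of the discrete integration-by-parts defect, not from the commutation.
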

\begin{proof}
See Section \ref{sec:proof.error}.
\end{proof}

\begin{remark}[Locking-free estimate]\label{robust_rem}
Recall the following regularity estimate for the displacement $\bu$ solution to \eqref{eq:model.strong} (see \cite[Lemma~$2.3$]{brenner1992linear}):
\begin{equation}
  \NORM{\mathbf{H}^2(\Omega)}{\bu}+\la\SEMINORM{H^1(\Omega)}{\divScal\, \bu} \lesssim \NORM{L^{2}(\Omega)}{\mathbf{f}},\label{reg1}
\end{equation}
where the hidden constant depends only on $\Omega$. The regularity assumption \eqref{reg1} holds for a sufficiently smooth domain $\Omega$ or if $\Omega$ is a convex polygon in two dimensions \cite{brenner1992linear, vogelius1983analysis}. In three-dimensional settings, however, it is very technical to derive such regularity; see \cite{grisvard1992singularities} for further details.

Combining the regularity property \eqref{reg1} and Theorem~\ref{error_estimate}, we obtain the following estimate, in which the hidden constant and the right-hand side are independent of $\la$:
$$\mu_{1}\NORM{L^2(\Omega)}{\grad^{\D}\bu_\D - \grad \bu} \lsim h\left(1+\mu_2\right)\NORM{L^2(\Omega)}{\mathbf{f}}.$$
\end{remark}
\begin{remark}[Comparison with existing methods]
A summary of the local degrees of freedom required by different mixed finite element formulations (on simplices) and virtual element methods is presented in Tables~\ref{tab:dof_comparison}-\ref{tab:dof_comparison_VEM}, which highlights the compactness of our low-order scheme compared with existing methods.
\begin{table}[h]
\centering
\scriptsize
\caption{Comparison of the number of local DOFs on simplices for various mixed finite element methods.}
\begin{tabular}{lcc}
\toprule
\textbf{Method} & \textbf{Polynomial Degree}  & \textbf{Total DoFs per Element} \\ 
\midrule
Falk~\cite{falk2008finite}~(2D) & Quadratic & 15 (12 stress + 3 displacement) \\
Adams \& Cockburn~\cite{adams2004mixed}~(3D) & Quartic & 162 (stress space) \\
Huang {et al.}~\cite{huang2024new}~(2D and 3D) & Lowest-order  &18~(2D)/48~(3D) \\
\textbf{Present work}~(2D and 3D) & Lowest-order & 9~(2D)/16~(3D) \\
\bottomrule
\end{tabular}
\label{tab:dof_comparison}
\end{table}

\begin{table}[h]
\centering
\scriptsize
\caption{Comparison of the number of local DOFs  for various virtual finite element methods.}
\begin{tabular}{lccc}
\toprule
\textbf{Method} &\textbf{Element}& \textbf{Polynomial Degree}   & \textbf{Total DoFs per Element} \\ 
\midrule
Huang {et al.}~\cite{Huang-2023}~(2D) & Pentagon &Lowest-order & 20 \\
Lovadina \& Molinari~\cite{lovadina2026volumetric}~(2D) & Pentagon & Lowest-order & 16 \\
Huang {et al.}~\cite{Huang-2025}~(3D) & Hexahedron&Lowest-order  & 42 \\
\textbf{Present work}~(2D and 3D)& Pentagon~(2D)/Hexahedron~(3D) & Lowest-order & 15~(2D)/30~(3D) \\
\bottomrule
\end{tabular}
\label{tab:dof_comparison_VEM}
\end{table}
\end{remark}
\begin{remark}
In contrast to the mixed formulation proposed in \cite{lovadina2026volumetric}, which gives rise to semi-definite saddle-point systems, the present DDR-based discretization retains the symmetric structure of the present model problem. Although the number of degrees of freedom is comparable in two dimensions, the proposed approach offers improved numerical stability and a simpler implementation.
\end{remark}

\section{Proof of the error estimate}\label{sec:proofs}

This section provides the theoretical foundation for the main results by establishing key preliminary results.

\begin{lemma}[DOF-based bound on the discrete norm]
Let $K\in\cells$. Recalling the definition \eqref{eq:def.normD} of $\NORM{1,K}{{\cdot}}$, we have, for all $\bw_\D=((\bw_{s})_{s\in\nodes},(w_{\sigma})_{\,\sigma\in \faces})\in\UD$,
\begin{equation*}
\NORM{1,K}{\bw_\D}\lsim h_K^{-1}|K|^{1/2}\left(\max_{s\in\nodes_K}|\bw_{s}|+\max_{\sigma\in\faces_{K}}|w_{\sigma}|\right).
\end{equation*}
\end{lemma}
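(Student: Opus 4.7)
The plan is to bound the two contributions to $\NORM{1,K}{\bw_\D}$ separately, namely $\NORM{L^2(K)}{\nabla^K\bw_\D}$ and $\sqrt{S_K(\bw_\D,\bw_\D)}$, using three standard consequences of mesh regularity: $|\sigma|\lsim h_K^{d-1}$ for $\sigma\in\faces_K$, $|K|\gtrsim h_K^d$, and the uniform boundedness of $\#\nodes_K$ and $\#\faces_K$. Set $M:=\max_{s\in\nodes_K}|\bw_s|+\max_{\sigma\in\faces_K}|w_\sigma|$.

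First, I would estimate $\nabla^K\bw_\D$ directly from the definition \eqref{eq:def.nablaK}. Since $\overline{\bw}_\sigma=\sum_{s\in\nodes_\sigma}\omega_s^\sigma\bw_s$ is a convex combination of nodal values, $|\overline{\bw}_\sigma|\leq M$; the triangle inequality then gives $|\nabla^K\bw_\D|\lsim h_K^{d-1}|K|^{-1}M$. Because $\nabla^K\bw_\D$ is constant on $K$, its $L^2(K)$-norm equals $|K|^{1/2}|\nabla^K\bw_\D|\lsim h_K^{d-1}|K|^{-1/2}M$, and applying $h_K^d\lsim|K|$ rewrites this in the target form $h_K^{-1}|K|^{1/2}M$.

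Next, for $S_K(\bw_\D,\bw_\D)$, the face piece $h_K^{d-2}\sum_{\sigma\in\faces_K}w_\sigma^2$ is immediate from the bound on $\#\faces_K$ and $h_K^{d-2}\lsim h_K^{-2}|K|$. For the node piece I would first control $\Pi^K\bw_\D(\x_s)$ through its definition in \eqref{eq:def.nablaK}: the affine part $\nabla^K\bw_\D(\x_s-\overline{\x}_K)$ is bounded by $h_K\,|\nabla^K\bw_\D|\lsim M$ thanks to the gradient estimate already obtained, while $|\overline{\bw}_K|\leq M$ as a convex combination. Hence $|\bw_s-\Pi^K\bw_\D(\x_s)|\lsim M$ for every $s\in\nodes_K$, and summing over the bounded number of nodes yields the same $h_K^{d-2}M^2\lsim h_K^{-2}|K|M^2$ as for the face contribution.

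Taking square roots and combining gives the claimed inequality. The argument is essentially bookkeeping; I do not anticipate any genuine obstacle. The only mildly subtle point is the apparent self-reference of $\Pi^K$ to $\nabla^K$, which is handled in the obvious order: bound $\nabla^K\bw_\D$ first, then feed that estimate into the bound on $\Pi^K\bw_\D(\x_s)$.
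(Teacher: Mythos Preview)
Your proof is correct; the argument is exactly the elementary bookkeeping you describe, and each step (convex-combination bounds, mesh-regularity estimates $|\sigma|\lsim h_K^{d-1}$ and $h_K^d\lsim|K|$, uniform bounds on $\#\faces_K$ and $\#\nodes_K$) is used appropriately. The paper does not give a self-contained proof but instead invokes \cite[Lemma~5.4]{MR4886366} applied to the vector $\bw_K=((\bw_s)_{s\in\nodes_K},(w_\sigma\normal_\sigma)_{\sigma\in\faces_K})$; your direct computation is essentially what underlies that cited lemma, so the two routes coincide at the level of ideas, with your version having the advantage of being self-contained.
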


\begin{proof} 
The proof follows from \cite[Lemma 5.4]{MR4886366} applied to $\bw_K\coloneq((\bw_{s})_{s\in\nodes_K},(w_{\sigma}\normal_\sigma)_{\,\sigma\in \faces_K})$ and with $\mathcal F^+_{\Gamma,K}=\faces_K$.
\end{proof}
\begin{theorem}[Discrete Korn inequality]\label{th:korn}
It holds
\begin{equation}\label{eq:Korn}
\NORM{1,\D}{\bv_\D}^2\lsim \NORM{L^2(\Omega)}{\bbeps_\D(\bv_\D)}^2 + S_\D(\bv_\D,\bv_\D)\qquad\forall \bv_\D\in\UDz.
\end{equation}
\end{theorem}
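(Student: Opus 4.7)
The plan is to transfer the inequality from the discrete setting to the conforming piecewise-affine reconstruction $\Pi^\D\bv_\D$, which satisfies the key identity $\nabla(\Pi^K\bv_\D)=\nabla^K\bv_\D$ on each $K$ (hence $\bbeps(\Pi^K\bv_\D)=\bbeps_\D(\bv_\D)_{|K}$), and then apply a broken Korn inequality for piecewise $H^1$ vector fields in the spirit of Brenner's result, adapted to polytopal meshes as, for example, in \cite{MR4886366}. This yields
\begin{equation*}
\sum_{K\in\cells}\NORM{L^2(K)}{\nabla^K\bv_\D}^2\lsim \NORM{L^2(\Omega)}{\bbeps_\D(\bv_\D)}^2 + \sum_{\sigma\in\faces^{\text{int}}}h_\sigma^{-1}\NORM{L^2(\sigma)}{\jump{\Pi^\D\bv_\D}}^2 + \sum_{\sigma\in\faces^{\text{ext}}}h_\sigma^{-1}\NORM{L^2(\sigma)}{\gamma^\sigma(\Pi^\D\bv_\D)}^2.
\end{equation*}
The no-rigid-motion issue is taken care of by the boundary terms, since $\bv_\D\in\UDz$.

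The main step is to control the face terms by $S_\D(\bv_\D,\bv_\D)$. Writing, at each $s\in\nodes_\sigma$ on an interior face $\sigma$ shared by $K_1,K_2$,
\begin{equation*}
(\Pi^{K_1}\bv_\D-\Pi^{K_2}\bv_\D)(\x_s)=\bigl(\Pi^{K_1}\bv_\D(\x_s)-\bv_s\bigr)-\bigl(\Pi^{K_2}\bv_\D(\x_s)-\bv_s\bigr),
\end{equation*}
the single-valuedness of $\bv_s$ turns the jump into a sum of local nodal residuals that are precisely what the first part of $S_K$ measures. Since $\jump{\Pi^\D\bv_\D}_{|\sigma}$ is affine on the convex polytope $\sigma$, its maximum is attained at a vertex, and therefore
\begin{equation*}
\NORM{L^2(\sigma)}{\jump{\Pi^\D\bv_\D}}^2 \lsim |\sigma| \sum_{s\in\nodes_\sigma}\sum_{K\in\cells_\sigma}\bigl|\Pi^K\bv_\D(\x_s)-\bv_s\bigr|^2.
\end{equation*}
Using mesh regularity to replace $h_\sigma^{-1}|\sigma|$ by $h_K^{d-2}$ (up to a uniform constant), and summing over $\sigma$, each cell is visited a bounded number of times and the nodal-residual term in \eqref{eq:def.SK} is recovered. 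The boundary faces are handled identically: on $\sigma\in\faces^{\text{ext}}$, all $\bv_s$ vanish, so $\Pi^K\bv_\D(\x_s)=\Pi^K\bv_\D(\x_s)-\bv_s$ is again controlled by $S_K$. Note that the face-bubble contribution $h_K^{d-2}\sum_{\sigma\in\faces_K}v_\sigma^2$ to $S_K$ is not even needed for the jump estimate; it simply survives harmlessly on the right-hand side of \eqref{eq:Korn}.

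The main obstacle is the invocation of the broken Korn inequality for affine reconstructions on a polytopal mesh (rather than a simplicial/tensor-product one), which requires care when the rigid-motion kernel is handled only through the boundary trace; this is where the regularity assumption on $\cells$ and the homogeneous condition encoded in $\UDz$ enter crucially. Once this is in place, the rest is a purely algebraic comparison between vertex residuals of the affine reconstruction and the stabilisation form $S_\D$.
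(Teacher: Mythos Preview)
Your approach is correct. The paper's proof (deferred to \cite[Theorem~5.7]{MR4886366}) goes through the node-averaging operator of \cite[Section~7.3.2]{hho-book}: one builds a conforming $H^1_0$ interpolant of $\Pi^\D\bv_\D$ on the matching simplicial submesh, applies the continuous Korn inequality to it, and bounds the difference in broken $H^1$-norm by the face jumps of $\Pi^\D\bv_\D$, which are then controlled by $S_\D$ exactly as in your second step. You instead invoke a Brenner-type broken Korn inequality directly as a black box and only carry out the jump-to-stabilisation comparison by hand.

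The two routes are close cousins---Brenner's proof itself rests on a conforming lift---so the underlying mechanism is the same; your packaging is simply more compact because it hides the averaging construction inside the cited inequality. The trade-off is that you need a polytopal version of broken Korn (or must pass to the simplicial submesh implicitly), which you correctly flag as the main obstacle; the paper's route makes that passage explicit via the node-averaging operator, at the cost of a longer argument. Your observation that the face-bubble part of $S_K$ is not needed for the jump bound is accurate and worth noting.
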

\begin{proof}
The proof follows the arguments of \cite[Theorem 5.7]{MR4886366}, except that, since no fractures are present here, the node-averaging operator from \cite[Section 7.3.2]{hho-book} can be used without modification. 
\end{proof}

\subsection{Commutation property}
The following lemma states that the interpolator $\IUD$ is a Fortin operator with respect to the divergence. It is classically the key to obtain a {locking-free} method, and hinges on the additional face degrees of freedom.
\begin{lemma}[Commutation property]\label{lm:com}
It holds, for all $\bu\in\mathbf{\mathcal{C}}^0_0(\overline{\Omega})$,
\begin{equation*}
\int_{K}\divScal_{\D}\left(\IUD \bu\right)\mathbb{I}:\zeta= \int_{K}\left(\divScal\bu\right) \mathbb{I}:\zeta \qquad \forall \zeta\in \Poly{0}(K)^{d\times d}. 
\end{equation*}
\end{lemma}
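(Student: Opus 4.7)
The identity to prove is scalar-valued up to the tensor $\mathbb{I}$: since $\zeta$ is constant on $K$, $\mathbb{I}:\zeta=\mathrm{Tr}(\zeta)$, and $\divScal_{\D}(\IUD\bu)$ is likewise constant on $K$ by \eqref{div_def_1}, the claim reduces to
\[
|K|\,\divScal_{\D}(\IUD\bu)\;=\;\int_{K}\divScal\bu.
\]
So my plan is to simply expand $|K|\,\divScal_{\D}(\IUD\bu)$ using \eqref{div_def_1} and the definition of the interpolator, and recover the right-hand side via the divergence theorem.

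The first step is to use the second form of the face unknown given right after \eqref{eq:def.ID}, namely
\[
(\IUD\bu)_{\sigma}=\frac{1}{|\sigma|}\int_{\sigma}\gamma^{\sigma}\bu\cdot\normal_{\sigma}-\overline{(\IUD\bu)}_{\sigma}\cdot\normal_{\sigma},
\]
since $\sum_{s\in\nodes_{\sigma}}\omega_s^{\sigma}\bu(\x_s)=\overline{(\IUD\bu)}_{\sigma}$. Multiplying by $|\sigma|w_{\Ksig}$ and using $w_{\Ksig}\normal_{\sigma}=\normal_{\Ksig}$, I obtain
\[
|\sigma|\,w_{\Ksig}\,(\IUD\bu)_{\sigma}=\int_{\sigma}\gamma^{\sigma}\bu\cdot\normal_{\Ksig}\;-\;|\sigma|\,\overline{(\IUD\bu)}_{\sigma}\cdot\normal_{\Ksig}.
\]
This is the crux of the argument: the face-bubble DOF is designed precisely so that, once weighted by $w_{\Ksig}\normal_\sigma$, it cancels the nodal-average contribution in the definition of $\divScal_\D$ and restores the true face integral of $\bu\cdot\normal_{\Ksig}$.

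The second step is to plug this into \eqref{div_def_1}: the two $\overline{(\IUD\bu)}_{\sigma}\cdot\normal_{\Ksig}$ contributions cancel, leaving
\[
|K|\,\divScal_{\D}(\IUD\bu)=\sum_{\sigma\in\faces_{K}}\int_{\sigma}\gamma^{\sigma}\bu\cdot\normal_{\Ksig}.
\]
Finally, since $\bu\in\mathbf{\mathcal{C}}^0_0(\overline{\Omega})$ can (by a density argument, or working componentwise once the identity is written in distributional form) be treated as $\bs{H}^1$-regular on $K$ for the purpose of applying Stokes, this sum equals $\int_{K}\divScal\bu$, which is the desired identity. Multiplying by $\mathrm{Tr}(\zeta)$ and integrating over $K$ yields the statement.

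I do not expect any serious obstacle: the proof is a careful bookkeeping exercise on orientations ($w_{\Ksig}$, $\normal_{\Ksig}$ vs.\ $\normal_{\sigma}$) together with a single application of the divergence theorem. The only subtle point is the justification of Stokes for $\bu$ only assumed continuous; this is handled either by a density argument or, more cleanly, by interpreting the identity as an equality between two definitions that agree whenever $\bu$ is piecewise smooth (which is enough since our argument never uses anything beyond evaluation at nodes and face integrals of $\bu$).
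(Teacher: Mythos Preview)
Your proof is correct and follows essentially the same route as the paper: expand $\divScal_{\D}(\IUD\bu)$ via \eqref{div_def_1}, use the reformulated face unknown to cancel the nodal-average terms against the first sum, and conclude by the divergence theorem. The only difference is cosmetic---the paper carries $\Pi^{\sigma}\bv_\D$ through the computation and invokes \eqref{eq:average.Pisigma}, whereas you work directly with $\overline{(\IUD\bu)}_{\sigma}$---and your remark on the regularity needed for Stokes is more careful than the paper, which applies the divergence theorem without comment.
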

\begin{proof}
Set $\bv_{\D}=\IUD \bu$. The definition of $\divScal_{\D}$  in \eqref{div_def_1} gives
\begin{align}\label{div_trace_1}
(\divScal_\D\bv_\D)|_K = 
\frac{1}{|K|} \sum_{\sigma \in \faces_{K}}  |\sigma| \overline{\bv}_{\sigma} \cdot\normal_\Ksig+\frac{1}{|K|} \sum_{\sigma \in \faces_{K}}|\sigma| w_{\Ksig} v_{\sigma}. 
\end{align}
By \eqref{eq:average.Pisigma}  and \eqref{eq:def.ID} we have
\[
\overline{\bv}_{\sigma}= \frac{1}{|\sigma|}\int\limits_{\sigma} \Pi^{\sigma}\bv_\D\quad\text{ and }\quad
v_{\sigma}=  \frac{1}{|\sigma|}\int\limits_{\sigma}\left(\gamma^{\sigma}\bu- \Pi^{\sigma}\bv_\D\right)\cdot\normal_\sigma.
\]
Plugged into \eqref{div_trace_1} this gives
\begin{align*}
\divScal_{\D}(\IUD \bu )&=\frac{1}{|K|} \sum_{\sigma \in \faces_{K}}  \left(\int\limits_{\sigma} \Pi^{\sigma}\bv_\D \right) \cdot\normal_\Ksig +\frac{1}{|K|} \sum_{\sigma \in \faces_{K}} w_{\Ksig} \left(\int\limits_{\sigma} \gamma^\sigma\bu-\int\limits_{\sigma} \Pi^{\sigma}\bv_\D\right)\cdot\normal_\sigma\\
\overset{w_{K\sigma}\normal_{K\sigma}=\normal_{\sigma}}&=\frac{1}{|K|} \sum_{\sigma \in \faces_{K}}\int\limits_{\sigma} \gamma^\sigma\bu\cdot\normal_{K\sigma}\\
&=\int\limits_{K} \divScal\, \bu,
\end{align*}
where we have used the divergence theorem in the conclusion. The conclusion of the lemma follows by multiplying this equality by the (constant) scalar number $\mathbb{I}:\zeta$.
\end{proof}
\begin{remark}
A scalar face bubble is added into the lowest-order DDR discrete space, which ensures the commutation property stated in Lemma~\ref{lm:com}. This property is key to obtaining a locking-free scheme in the present setting.
\end{remark}
\subsection{Proof of the abstract error estimate (Theorem  \ref{abstr.err.est})}\label{sec:proof.abstract.error}
We first introduce the discrete energy inner product $\IPbracket{\cdot}{\cdot}_{e,\D}$, defined for $\bu_{\D},\bv_{\D} \in \UDz$ as follows
\begin{equation}\label{discrete:energy:ps}
\IPbracket{\bu_{\D}}{\bv_{\D}}_{e,\D} = \int_{\Omega} \bbsigma_{\D} (\bu_{\D}): \bbeps_{\D}(\bv_{\D} )  + \mu_1 S_{\D}( \bu_{\D}, \bv_{\D} ) 
\end{equation}	
 and denote by $\NORM{e,\D}{{\cdot}}$ its associated norm. By definitions \eqref{bbsig_def} of $\bbsigma_{\D}$ and \eqref{div_def_1} of $\divScal_\D$, we have, for all $\bv_\D\in\UD$,
 \[
 \bbsigma_{\D} (\bv_{\D}): \bbeps_{\D}(\bv_{\D})\ge 2\mu |\bbeps_\D(\bv_\D)|^2+\lambda(\divScal_{\D}\bv_\D)^2\ge 2\mu_1 |\bbeps_\D(\bv_\D)|^2.
\]
 The discrete Korn inequality \eqref{eq:Korn} then yields
 \begin{equation}\label{relat.norme.discrete}
\mu_1 \NORM{1,\D}{\bv_{\D}}^2 \lsim \NORM{e,\D}{\bv_{\D}}^2\quad \forall \bv_{\D} \in \UDz.
\end{equation} 
Since $\bu$ is a weak solution of the linear elasticity problem, we have $-\div(\bbsigma(\bu))=\mathbf{f}\in \mathbf{L}^2(\Omega)$. The definition \eqref{def:WD} of $w_\D$ gives, for all $\bw_\D\in\UDz$
\begin{equation}\label{cons.dual_equ_1}
\int_{\Omega} \bbsigma(\bu): \bbeps_{\D}(\bw_{\D} ) -  \int_{\Omega}  \mathbf{f} \cdot \wt{\Pi}^{\D}\bw_{\D} =  w_{\D}(\bbsigma(\bu),\bw_{\D}).
\end{equation}	 
Subtracting \eqref{mixed_discrete} (with $\bv_\D=\bw_\D$) from \eqref{cons.dual_equ_1}, we obtain
\begin{equation}\label{cons.dual.somme.var.formul}
	\int_{\Omega} (\bbsigma(\bu)-\bbsigma_{\D}(\bu_{\D})): \bbeps_{\D}(\bw_{\D} ) - \mu_1S_{\D}( \bu_{\D}, \bw_{\D} )   =   w_{\D}(\bbsigma(\bu),\bw_{\D}).
\end{equation}	 
Take $\bv_{\D} \in \UDz$ and set $\bw_{\D} = \bv_{\D} - \bu_{\D}$ in \eqref{cons.dual.somme.var.formul} to get
\begin{align}\label{estima.1}
\NORM{e,\D}{\bv_{\D} - \bu_{\D}}^2 &= w_{\D}(\bbsigma(\bu),\bv_{\D}-\bu_{\D}) -\int_{\Omega} (\bbsigma(\bu)-\bbsigma_{\D}(\bv_{\D})): \bbeps_{\D}(\bv_{\D}-\bu_{\D}) \nonumber\\
&  \quad+ \mu_1 S_{\D}(\bv_{\D},\bv_{\D}-\bu_{\D} )\nonumber\\
\overset{\eqref{eq:model.strong}, \eqref{bbsig_def}}&= w_{\D}(\bbsigma(\bu),\bv_{\D}-\bu_{\D}) -2\mu \int_{\Omega} (\bbeps(\bu)-\bbeps_{\D}(\bv_{\D})):\bbeps_{\D}(\bv_{\D}-\bu_{\D}) \nonumber\\
 &\quad-\la \int_{\Omega} ((\divScal\, \bu )-\divScal_{\D}(\bv_{\D} )) \mathbb{I}: \bbeps_{\D}(\bv_{\D}-\bu_{\D}) 
  + \mu_1 S_{\D}(\bv_{\D},\bv_{\D}-\bu_{\D}).
\end{align}
Note that $\bu\in \mathbf{\mathcal{C}}^0_0(\overline{\Omega})$, see \cite[Theorem~$7.97$ on page~$493$]{salsa2016partial}, which justifies that we can choose $\bv_{\D}:=\IUD \bu$ in \eqref{estima.1}. Using Lemma \ref{lm:com}~(on each $K\in\cells$, with $\zeta=\bbeps_{\D}(\bv_{\D}-\bu_{\D})|_K$) leads to
\begin{align*}
\NORM{e,\D}{\bv_{\D}-\bu_{\D}}^2 ={}& w_{\D}(\bbsigma(\bu),\bv_{\D}-\bu_{\D}) -2\mu \int_{\Omega} (\bbeps(\bu)-\bbeps_{\D}(\bv_{\D})):\bbeps_{\D}(\bv_{\D}-\bu_{\D}) \nonumber\\& + \mu_1 S_{\D}(\bv_{\D},\bv_{\D}-\bu_{\D}). 
\end{align*}
Invoking the norm estimate \eqref{relat.norme.discrete}, the definitions \eqref{def:WD} of $\mathcal W_\D$ and \eqref{def:CD} of $C_\D$ as well as Cauchy--Schwarz inequalities, we obtain
\begin{align*}
\mu_1 \NORM{1,\D}{\bv_{\D} - \bu_{\D}}^2\lesssim{}& \mathcal{W}_\D(\bbsigma(\bu))\NORM{1,\D}{\bv_{\D} - \bu_{\D}}
+\mu_2C_\D(\bu,\bv_\D)\NORM{1,\D}{\bv_{\D} - \bu_{\D}}\\
&+\mu_1 S_\D(\bv_\D,\bv_\D)^{1/2}S_\D(\bv_\D-\bu_\D,\bv_\D-\bu_\D)^{1/2}\\
\lesssim{}&\mathcal{W}_\D(\bbsigma(\bu))\NORM{1,\D}{\bv_{\D} - \bu_{\D}}
+\mu_2C_\D(\bu,\bv_\D)\NORM{1,\D}{\bv_{\D} - \bu_{\D}},
\end{align*}
where the conclusion follows from $\mu_1\le \mu_2$, $S_\D(\bv_\D,\bv_\D)^{1/2}\le C_\D(\bu,\bu_\D)$ and $S_\D(\bv_\D-\bu_\D,\bv_\D-\bu_\D)^{1/2}\le
\NORM{1,\D}{\bv_\D-\bu_\D}$.
Simplifying by $\NORM{1,\D}{\bv_{\D} - \bu_{\D}}$, we infer
\[
\mu_1 \NORM{1,\D}{\bv_{\D} - \bu_{\D}}\lesssim\mathcal{W}_\D(\bbsigma(\bu))
+\mu_2C_\D(\bu,\bv_\D).
\]
The estimate \eqref{estimat_error} then follows by using a triangle inequality to get
\begin{align*}
\NORM{L^2(\Omega)}{\nabla^\D\bu_\D-\nabla \bu}\le{}&
\NORM{L^2(\Omega)}{\nabla^\D\bu_\D-\nabla^D \bv_\D}+\NORM{L^2(\Omega)}{\nabla^\D\bv_\D-\nabla \bu}\\
\le{}& \NORM{1,\D}{\bu_\D-\bv_\D}+C_\D(\bu,\bv_\D)
\end{align*}
and by recalling that $\bv_\D=\IUD\bu$. 

\subsection{Proof of the error estimate (Theorem \ref{error_estimate})}\label{sec:proof.error}

Theorem \ref{error_estimate} directly follows from the abstract error estimate \eqref{estimat_error}, and Lemmas \ref{lem:consistency.nabla} and \ref{lem:adjoint.consistency} below.
\begin{lemma}[Consistency of the gradient reconstruction]\label{lem:consistency.nabla}
If $\bu\in \U_0\cap \mathbf{H}^2(\cells)$ then, recalling the definition \eqref{def:CD} of $C_\D$, it holds
\begin{equation*}
C_\D(\bu,\IUD\bu)\lesssim h\SEMINORM{H^2(\cells)}{{\bu}}.
\end{equation*}
\end{lemma}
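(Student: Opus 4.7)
The plan is to combine \emph{polynomial consistency} of the reconstructions on $\Polygras{1}(K)^d$ with a local affine approximation of $\bu$, and to invoke the Bramble--Hilbert lemma.

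First, I would verify polynomial consistency on each $K\in\cells$: if $\bu$ agrees on $K$ with an affine function $\x\mapsto A\x+\mathbf{b}$, then, setting $\bv_\D=\IUD\bu$, the centroid identities $\sum_{s\in\nodes_\sigma}\omega_s^\sigma\x_s=\overline{\x}_\sigma$ and $\frac{1}{|\sigma|}\int_\sigma\x=\overline{\x}_\sigma$ yield $\overline{\bv}_\sigma=A\overline{\x}_\sigma+\mathbf{b}=\frac{1}{|\sigma|}\int_\sigma\bu$, hence $v_\sigma=0$. A componentwise application of the divergence theorem then gives $\nabla^K\bv_\D=A=\nabla\bu$, and $\overline{\bv}_K=A\overline{\x}_K+\mathbf{b}$ gives $\Pi^K\bv_\D=\bu$ on $K$; both summands in $S_K$ thus vanish.

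On each $K\in\cells$, let $\bu_K^1\in\Polygras{1}(K)^d$ be the averaged Taylor polynomial of $\bu$ at $\overline{\x}_K$; under the mesh-regularity assumption of \cite[Definition~1.9]{hho-book} it satisfies the Bramble--Hilbert estimates
\[
\|\bu-\bu_K^1\|_{L^2(K)}+h_K|\bu-\bu_K^1|_{H^1(K)}+h_K^{d/2}\|\bu-\bu_K^1\|_{L^\infty(K)}\lesssim h_K^2|\bu|_{H^2(K)},
\]
the $L^\infty$-bound relying on the continuous embedding $H^2(K)\hookrightarrow C^0(\overline K)$ valid for $d\in\{2,3\}$. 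The polynomial consistency just established allows me to substitute $\bu-\bu_K^1$ for $\bu$ in both pieces of $C_\D(\bu,\IUD\bu)$. For the gradient part, the DOF-based bound on $\NORM{1,K}{\cdot}$ from the first lemma of Section~\ref{sec:proofs}, combined with $|(\IUD(\bu-\bu_K^1))_s|,|(\IUD(\bu-\bu_K^1))_\sigma|\lesssim\|\bu-\bu_K^1\|_{L^\infty(\overline K)}$ and $|K|^{1/2}\sim h_K^{d/2}$, gives $\|\nabla^K(\IUD(\bu-\bu_K^1))\|_{L^2(K)}\lesssim h_K^{d/2-1}\|\bu-\bu_K^1\|_{L^\infty(\overline K)}\lesssim h_K|\bu|_{H^2(K)}$; this, together with $\|\nabla(\bu-\bu_K^1)\|_{L^2(K)}\lesssim h_K|\bu|_{H^2(K)}$, bounds the gradient contribution by $h_K|\bu|_{H^2(K)}$. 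For the stabilization, each summand in $S_K$ is controlled by $h_K^{d-2}\|\bu-\bu_K^1\|_{L^\infty(\overline K)}^2$ (using also that $\Pi^K(\IUD(\bu-\bu_K^1))(\x_s)$ is a linear combination of the local DOFs scaled by $h_K$), giving $S_K(\IUD(\bu-\bu_K^1),\IUD(\bu-\bu_K^1))\lesssim h_K^{d-2}\cdot h_K^{4-d}|\bu|_{H^2(K)}^2=h_K^2|\bu|_{H^2(K)}^2$. Squaring, summing over $K\in\cells$, and taking the square root concludes.

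I expect the main technical delicacy to be the careful use of $H^2\hookrightarrow L^\infty$ in three dimensions, where the embedding is borderline and the scaling of the face-bubble DOFs of $\IUD(\bu-\bu_K^1)$ must be tracked with care under the mesh-regularity assumptions of \cite[Definition~1.9]{hho-book}; apart from this, all remaining estimates reduce to standard trace/inverse inequalities on a generic polytope.
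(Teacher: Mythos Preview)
Your argument is correct and complete. The paper itself gives no proof, deferring entirely to \cite[Theorem~5.8]{MR4886366}, so a detailed comparison is not possible; your route---polynomial consistency of $\nabla^K$, $\Pi^K$ and $S_K$ on $\Polygras{1}(K)^d$, followed by Bramble--Hilbert via a local affine approximant and the DOF-based bound on $\NORM{1,K}{{\cdot}}$---is the standard one and almost certainly what the reference does. One minor remark: the embedding $H^2(K)\hookrightarrow L^\infty(K)$ is \emph{not} borderline in dimension~$3$, since $2>3/2$; the only genuine care needed is that the embedding constant scales correctly (as $h_K^{2-d/2}$) uniformly across cells, which follows from the mesh-regularity assumption you already invoke.
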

\begin{proof}
The proof is analogous to that of \cite[Theorem 5.8]{MR4886366}, except that our domain contains no fractures.
\end{proof}

\begin{lemma}[Adjoint consistency] \label{lem:adjoint.consistency}
If $\bu \in H^2(\cells)$ then, recalling the definition \eqref{def:WD} of the adjoint consistency error $\mathcal{W}_\D$, it holds
\begin{equation*}
\mathcal{W}_{\D}(\bbsigma(\bu)) \lsim  h\left(\mu_2\SEMINORM{H^2(\cells)}{\bu}+\la\SEMINORM{H^1(\cells)}{\divScal\bu}\right),
\end{equation*} 
where the coefficient hidden in $\lsim$ is independent of Lam\'e coefficients $\mu$ and $\la$.
\end{lemma}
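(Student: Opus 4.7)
The plan is to recast $w_\D(\bbsigma(\bu),\bv_\D)$ as a single cell-wise sum of ``double differences'' that expose an $O(h)$ factor coming from the smoothness of $\bbsigma(\bu)$, then close by Cauchy--Schwarz. The strategy works for any symmetric $\bbsigma\in\bs{H}^1(\cells)\cap\Sigmaspace$ and specialises to the elasticity tensor at the end.

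First I would rewrite both integrals appearing in $w_\D$ element-wise. Since $\bbsigma$ is symmetric and $\nabla^K\bv_\D$ is constant on $K$, $\int_K\bbsigma:\bbeps_\D(\bv_\D)=|K|\bar\bbsigma_K:\nabla^K\bv_\D$, where $\bar\bbsigma_K$ denotes the $L^2$-average of $\bbsigma$ on $K$. Using the definition \eqref{eq:def.nablaK} of $\nabla^K$ together with the identity $v_\sigma w_{\Ksig}\normal_{\Ksig}=v_\sigma\normal_\sigma$ (so that $\hat\bv_\sigma:=\overline{\bv}_\sigma+v_\sigma\normal_\sigma$ is $K$-independent), this becomes $\sum_{\sigma\in\faces_K}|\sigma|\bar\bbsigma_K\normal_{\Ksig}\cdot\hat\bv_\sigma$. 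For the second integral, integrating by parts against the constant vector $\overline{\bv}_K$ gives $\int_K\overline{\bv}_K\cdot\div\bbsigma=\sum_{\sigma\in\faces_K}|\sigma|\bar\bbsigma_\sigma\normal_{\Ksig}\cdot\overline{\bv}_K$, where $\bar\bbsigma_\sigma$ is the face $L^2$-average of $\bbsigma$. The algebraic step then combines two null identities: $\sum_{\sigma\in\faces_K}|\sigma|\normal_{\Ksig}=\mathbf{0}$ for each $K$ (divergence theorem applied to $1$) and $\sum_{K\in\cells_\sigma}\normal_{\Ksig}=\mathbf{0}$ on interior faces, together with $\hat\bv_\sigma=\mathbf{0}$ on boundary ones (by the $\UDz$ constraint). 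These force the sums $\sum_{K,\sigma}|\sigma|\bar\bbsigma_K\normal_{\Ksig}\cdot\overline{\bv}_K$ and $\sum_{K,\sigma}|\sigma|\bar\bbsigma_\sigma\normal_{\Ksig}\cdot\hat\bv_\sigma$ to vanish, so they may be freely subtracted, yielding
\[
w_\D(\bbsigma,\bv_\D)=\sum_{K\in\cells}\sum_{\sigma\in\faces_K}|\sigma|(\bar\bbsigma_K-\bar\bbsigma_\sigma)\normal_{\Ksig}\cdot\bigl(\hat\bv_\sigma-\overline{\bv}_K\bigr).
\]

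To estimate each factor I would use standard tools. For the stress, trace and Poincaré inequalities on polytopal cells give $|\sigma|\,|\bar\bbsigma_K-\bar\bbsigma_\sigma|=\bigl|\int_\sigma(\bbsigma-\bar\bbsigma_K)\bigr|\lesssim|\sigma|^{1/2}h_K^{1/2}|\bbsigma|_{H^1(K)}$. For the test function, exploiting $\Pi^K\bv_\D(\bar\x_K)=\overline{\bv}_K$ and the affinity of $\Pi^K\bv_\D$ (which gives $\Pi^K\bv_\D(\bar\x_\sigma)=\sum_{s\in\nodes_\sigma}\omega_s^\sigma\Pi^K\bv_\D(\x_s)$ thanks to $\sum_s\omega_s^\sigma=1$), I decompose
\[
\hat\bv_\sigma-\overline{\bv}_K=\Bigl(\overline{\bv}_\sigma-\Pi^K\bv_\D(\bar\x_\sigma)\Bigr)+\nabla^K\bv_\D(\bar\x_\sigma-\bar\x_K)+v_\sigma\normal_\sigma.
\]
The first piece is a weighted combination of the $\bv_s-\Pi^K\bv_\D(\x_s)$, hence controlled by $S_K(\bv_\D,\bv_\D)^{1/2}$ via \eqref{eq:def.SK}; the last piece is directly $S_K$-controlled by the same definition; and the middle one is bounded by $h_K|\nabla^K\bv_\D|\lesssim h_K^{1-d/2}\|\nabla^K\bv_\D\|_{L^2(K)}$. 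The dimension-dependent powers of $h_K$ balance so that each cell contributes $\lesssim h\,|\bbsigma|_{H^1(K)}\NORM{1,K}{\bv_\D}$, and a final Cauchy--Schwarz over $\cells$ delivers $|w_\D(\bbsigma,\bv_\D)|\lesssim h\,|\bbsigma|_{H^1(\cells)}\NORM{1,\D}{\bv_\D}$. Specialising to $\bbsigma=2\mu\bbeps(\bu)+\lambda(\divScal\bu)\mathbb{I}$ and using $|\bbeps(\bu)|_{H^1(\cells)}\le|\bu|_{H^2(\cells)}$ yields the claim after dividing by $\NORM{1,\D}{\bv_\D}$ and taking the supremum.

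The main difficulty is the algebraic rearrangement producing the double difference: both null sums must be invoked simultaneously, as failing to use either one only yields an $O(1)$ bound. A secondary, purely technical, check is that the $h_K$-scalings balance correctly in the factor-wise step, and that the final constant depends linearly on $\mu_2$ and $\lambda$ (and not on $\mu_1^{-1}$), which is precisely what drives the robustness in the limit $\lambda\to+\infty$ claimed in Theorem~\ref{error_estimate}.
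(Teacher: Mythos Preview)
Your proposal is correct and follows essentially the same route as the paper: expand $w_\D$, subtract the two null sums (one from $\sum_{\sigma\in\faces_K}|\sigma|\normal_{\Ksig}=\mathbf 0$, one from continuity of the normal stress across interior faces together with the boundary condition) to produce a double-difference, then close with Cauchy--Schwarz and a trace/Poincar\'e bound on the stress factor. The only cosmetic differences are that the paper keeps $\overline{\bv}_\sigma-\overline{\bv}_K$ and $v_\sigma\normal_\sigma$ as two separate terms rather than grouping them into your $\hat\bv_\sigma-\overline{\bv}_K$, and that it outsources your explicit three-term decomposition of the test-function factor to Lemma~\ref{inequ_pour_consist} (itself referred to \cite{MR4886366}).
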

\begin{proof}
The definition \eqref{def:WD} of $w_\D$ yields 
	$$
\begin{aligned}
w_{\D}(\bbsigma(\bu),\bv_{\D}) = &\sum_{K \in \cells}\left( \Big[\int_{K} \bbsigma(\bu) \Big] : \bbeps_{K}(\bv_{\D} ) +\sum_{\sigma \in \faces_{K}} \overline{\bv}_K \cdot  \int_{\sigma} (\bbsigma(\bu)|_K\, \normal_{\Ksig}) \right)\\
= &\sum_{K \in \cells}  \left(|K|\bbtau_K : \bbeps_{K}(\bv_{\D} ) +\sum_{\sigma \in \faces_{K}} |\sigma|\,\overline{\bv}_K \cdot  \mathbf{\tau}_{\Ksig} \right)
\end{aligned}
$$ with $\bbeps_K(\bv_\D)=(\bbeps_\D(\bv_\D))|_K$ and
$$
\bbtau_K = \frac{1}{|K|} \int_K \bbsigma(\bu) \quad \text{and} \quad \mathbf{\tau}_{K\sigma} = \frac{1}{|\sigma|} \int_{\sigma} \left(\bbsigma(\bu)|_K \normal_{K \sigma}\right).
  $$
	Noticing that $\bbtau_K:\bbeps_K(\bv_{\D}) = \bbtau_K:\grad^K \bv_{\D}$ (since $\bbtau_K$ is symmetric) and recalling the definition \eqref{eq:def.nablaK} of  $\grad^K$, we infer
\begin{equation}\label{eq:adjoint.consistency.1}
\begin{aligned}
w_{\D}(\bbsigma(\bu),\bv_{\D}) =& \sum_{K \in \cells} \sum_{\sigma \in \faces_K} \left(|\sigma|\, \overline{\bv}_{\sigma} \cdot ( \bbtau_K \normal_{\Ksig})+|\sigma|\, {v}_{\sigma}\normal_{\sigma}\cdot( \bbtau_K \normal_{\Ksig}) + |\sigma|\, \overline{\bv}_{K} \cdot \mathbf{\tau}_{K\sigma} \right).
\end{aligned}
\end{equation}
Moreover, as $\sum_{\sigma \in \faces_K} |\sigma|\,\normal_{\Ksig}=0$ for all $K\in\cells$,
	\begin{equation}\label{eq:zero.1}
		\sum_{K \in \cells} \sum_{\sigma \in \faces_K} \, |\sigma|\overline{\bv}_{K} \cdot \left( \bbtau_{K} \normal_{\Ksig}\right) = \sum_{K \in \cells} \overline{\bv}_{K} \cdot \left(\bbtau_{K} \sum_{\sigma \in \faces_K} |\sigma|\,\normal_{\Ksig}\right)=0.
	\end{equation}
	We also note that, by regularity of $\bu$, $\mathbf{\tau}_{K\sigma}+\mathbf{\tau}_{L\sigma}=0$ whenever $\sigma$ is a face between two cells $K, L$. Since $\overline{\bv}_\sigma=0$ and $v_\sigma=0$ if $\sigma$ is a boundary face (since $\bv_\D\in\UDz$), gathering the sums by face we infer that
	\begin{equation}\label{eq:zero.2}
    \sum_{K \in \cells} \sum_{\sigma \in \faces_K} |\sigma|\, \overline{\bv}_{\sigma} \cdot \mathbf{\tau}_{K\sigma}=0
    \quad\text{ and }\quad
    \sum_{K \in \cells} \sum_{\sigma \in \faces_K} |\sigma|\, v_{\sigma} \normal_\sigma\cdot \mathbf{\tau}_{K\sigma}=0.
	\end{equation}
Subtracting \eqref{eq:zero.1} and \eqref{eq:zero.2} from \eqref{eq:adjoint.consistency.1} leads to
\begin{align*}
w_{\D}&(\bbsigma(\bu),\bv_{\D})\\
 ={}&
  \sum_{K \in \cells} \sum_{\sigma \in \faces_K} |\sigma|\, \overline{\bv}_{\sigma} \cdot \left(\bbtau_{K}\normal_{\Ksig} - \mathbf{\tau}_{K\sigma}\right) 
    +\sum_{K \in \cells} \sum_{\sigma \in \faces_K}|\sigma|\, {v}_{\sigma}\normal_{\sigma}\cdot\left(\bbtau_K \normal_{\Ksig} - \mathbf{\tau}_{K\sigma}\right)\\
  &+ \sum_{K \in \cells} \sum_{\sigma \in \faces_K} |\sigma|\, \overline{\bv}_{K} \cdot \left( \mathbf{\tau}_{K\sigma} - \bbtau_{K}\normal_{\Ksig}\right) \\ 
={}&\sum_{K \in \cells} \sum_{\sigma \in \faces_K} |\sigma|\, \left(\overline{\bv}_{\sigma} - \overline{\bv}_{K} \right) \cdot \left(\mathbf{\tau}_{K\sigma} - \bbtau_{K}\normal_{\Ksig}\right)
+ \sum_{K \in \cells} \sum_{\sigma \in \faces_K} |\sigma|\, {v}_{\sigma}\normal_{\sigma}\cdot \left(\bbtau_{K}\normal_{\Ksig}-\mathbf{\tau}_{K\sigma} \right).
\end{align*}
Using the Cauchy--Schwarz inequality and invoking Lemma \ref{inequ_pour_consist} below, we infer
  \begin{align}
w_{\D}(\bbsigma(\bu),\bv_{\D}) \le{}& \left(\sum_{K \in \cells} \sum_{\sigma \in \faces_K} \frac{|\sigma|}{h_K} |\overline{\bv}_{\sigma}-\overline{\bv}_{K} |^2\right)^{\nicefrac12} \left(\sum_{K \in \cells} \sum_{\sigma \in \faces_K} |\sigma|h_K|\mathbf{\tau}_{K\sigma} - \bbtau_{K}\normal_{\Ksig} |^2\right)^{\nicefrac12}\nonumber\\
& + \left(\sum_{K \in \cells} \sum_{\sigma \in \faces_K} \frac{|\sigma|}{h_K} |{v}_{\sigma}\normal_{\sigma} |^2\right)^{\nicefrac12} \left(\sum_{K \in \cells} \sum_{\sigma \in \faces_K} |\sigma|h_K|\mathbf{\tau}_{K\sigma} - \bbtau_{K}\normal_{\Ksig} |^2\right)^{\nicefrac12}\nonumber\\
\lesssim{}& \NORM{1,\D}{\bv_{\D}}  \left(\sum_{K \in \cells} \sum_{\sigma \in \faces_K} |\sigma| h_K |\mathbf{\tau}_{K\sigma} - \bbtau_{K}\normal_{\Ksig} |^2\right)^{\nicefrac12}.
  \label{est:limconf.1}
  \end{align}
  By \cite[Lemma B.6]{gdm}, we have
  \begin{align*}
\sum_{K \in \cells} \sum_{\sigma \in \faces_K}|\sigma| h_K |\mathbf{\tau}_{K\sigma} - \bbtau_{K}\normal_{\Ksig} |^2
&\lesssim\sum_{K \in \cells} h_K^2\SEMINORM{H^1(K)}{\bbsigma(\bu)}^2 \\ 
&\lesssim h^2 \left(\mu_2\SEMINORM{H^2(\cells)}{\bu}+\la\SEMINORM{H^1(\cells)}{\divScal\bu}\right)^2,
\end{align*}
where the conclusion follows from the definition of $\bbsigma$ in \eqref{eq:model.strong}.
Plugging this into \eqref{est:limconf.1}, dividing by $\NORM{1,\D}{\bv_{\D}}$,  and taking the supremum over $\bv_\D$ concludes the proof.
\end{proof}
\begin{lemma}\label{inequ_pour_consist}
For all $\bv_{\D} \in \UDz$, the following two inequalities hold:
\begin{align*}
\left(\sum_{K \in \cells} \sum_{\sigma \in \faces_K} \frac{|\sigma|}{h_K}|\overline{\bv}_{\sigma} - \overline{\bv}_{K}|^2 \right)^{\nicefrac12}
+\left(\sum_{K \in \cells} \sum_{\sigma \in \faces_K} \frac{|\sigma|}{h_K} |{v}_{\sigma}\normal_{\sigma} |^2\right)^{\nicefrac12}
\lesssim{}& \NORM{1,\D}{\bv_{\D}}.
\end{align*}
\end{lemma}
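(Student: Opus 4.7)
My plan is to establish the two estimates separately and match each piece of the left-hand side to a component of the $\|\cdot\|_{1,K}$ norm, namely either the $L^2(K)$ norm of $\nabla^K\bv_\D$ or the local stabilisation $S_K$. Mesh regularity gives the key scaling $|\sigma|/h_K \lesssim h_K^{d-2}$ for every $\sigma \in \faces_K$ and $|K| \gtrsim h_K^d$, together with bounded cardinalities $\#\faces_K, \#\nodes_K, \#\nodes_\sigma \lesssim 1$. These will be used throughout without comment.

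The face-bubble estimate is essentially immediate. Using $|v_\sigma \normal_\sigma|^2 = v_\sigma^2$ and the scaling above,
\[
\sum_{K\in\cells}\sum_{\sigma\in\faces_K}\frac{|\sigma|}{h_K}|v_\sigma \normal_\sigma|^2
\lesssim \sum_{K\in\cells} h_K^{d-2}\sum_{\sigma\in\faces_K} v_\sigma^2,
\]
which, by the definition \eqref{eq:def.SK} of $S_K$, is bounded above by $S_\D(\bv_\D,\bv_\D) \le \NORM{1,\D}{\bv_\D}^2$.

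The first sum is the less routine part and uses the polynomial identity coming from the definition \eqref{eq:def.nablaK} of $\Pi^K$, namely
\[
\bv_s - \overline{\bv}_K = \bigl(\bv_s - \Pi^K\bv_\D(\x_s)\bigr) + \nabla^K\bv_\D\,(\x_s - \overline{\x}_K).
\]
Since $\overline{\bv}_\sigma = \sum_{s\in\nodes_\sigma}\omega_s^\sigma \bv_s$ with $\omega_s^\sigma \in [0,1]$ summing to one, averaging this identity over $s \in \nodes_\sigma$ and applying a Cauchy--Schwarz inequality yields
\[
|\overline{\bv}_\sigma - \overline{\bv}_K|^2 \lesssim \sum_{s\in\nodes_\sigma}|\bv_s - \Pi^K\bv_\D(\x_s)|^2 + h_K^2\,|\nabla^K\bv_\D|^2,
\]
where I have used $|\x_s - \overline{\x}_K| \lesssim h_K$. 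Multiplying by $|\sigma|/h_K \lesssim h_K^{d-2}$ and summing over $\sigma \in \faces_K$, the first term on the right contributes at most a bounded multiple of $h_K^{d-2}\sum_{s\in\nodes_K}|\bv_s - \Pi^K\bv_\D(\x_s)|^2 \le S_K(\bv_\D,\bv_\D)$, while the second contributes $\lesssim h_K^d |\nabla^K\bv_\D|^2 \lesssim |K|\,|\nabla^K\bv_\D|^2 = \NORM{L^2(K)}{\nabla^K\bv_\D}^2$, using that $\nabla^K\bv_\D$ is constant on $K$. Summing over $K\in\cells$ and taking square roots yields the required bound, and combining the two estimates by a triangle inequality gives the lemma.

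The only real obstacle is the bookkeeping in the second step: one must carefully use the algebraic identity provided by $\Pi^K$ so that both the stabilisation contribution at nodes and the polynomial contribution from $\nabla^K\bv_\D$ appear with the right scaling. Everything else is mesh-regularity and Cauchy--Schwarz.
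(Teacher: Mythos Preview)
Your proof is correct. The paper does not give a self-contained argument for this lemma; it simply refers to \cite[Lemma~5.11]{MR4886366} and says the same reasoning applies. Your direct argument is exactly the kind of proof one expects to find behind that reference: for the bubble term, match $\frac{|\sigma|}{h_K}v_\sigma^2$ against the face part $h_K^{d-2}v_\sigma^2$ of $S_K$ via $|\sigma|\lesssim h_K^{d-1}$; for the first term, expand $\overline{\bv}_\sigma-\overline{\bv}_K$ using the affine identity $\Pi^K\bv_\D(\x_s)=\nabla^K\bv_\D(\x_s-\overline{\x}_K)+\overline{\bv}_K$ so that the node part $h_K^{d-2}\sum_{s\in\nodes_K}|\bv_s-\Pi^K\bv_\D(\x_s)|^2$ of $S_K$ and the gradient part $\NORM{L^2(K)}{\nabla^K\bv_\D}^2$ emerge with the right scalings. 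So the approaches coincide in substance; you have just made explicit what the paper outsources.
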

\begin{proof}	
The proof of this lemma follows from the same arguments as in \cite[Lemma 5.11]{MR4886366}. 
\end{proof}

\section{Frictionless contact mechanics model}\label{Tresca_friction_extension}

In this section, we show that our approach can be combined with the one in \cite{MR4886366} to design a {locking-free} scheme for contact mechanics. The model in \cite{MR4886366} is that of a fractured medium, in which a linear elastic law is considered in the matrix while a frictionless law is imposed at the fractures (see Remark \ref{rem:tresca} for an extension to contact problems with Tresca friction). We denote by $\Gamma$ the fracture network, made of a union of flat surfaces (the fractures), and by $\pm$ the two sides arbitrarily chosen for each fracture. The outer normal to the $+$ side of each fracture is denoted by $\normal^+$.
In strong form, the frictionless contact mechanics model is expressed as
	\begin{equation}\label{sec6:mode_tresca_1} 
		\left\{\!\!\!\!
		\begin{array}{lll}
			& -\div \bbsigma(\bu)= \mathbf{f}  & \mbox{ on } \Omega{\backslash}\ov\Gamma,\\[1ex]
			& \bbsigma(\bu)=2\mu\bbeps(\bu)+\la (\divScal\, \bu )\mathbb{I} & \mbox{ on }  \Omega{\backslash}\ov\Gamma,\\[1ex]
			& \gamma_\normal^+\bbsigma(\bu)+\gamma_\normal^-\bbsigma(\bu) = \mathbf{0} & \mbox{ on }  \Gamma,\\[1ex]
			& T_{\normal}(\bu) \leqslant 0,~ \jump{\bu }_{\normal} \leqslant 0,~ \jump{\bu }_{\normal}T_{\normal}(\bu) =0 & \mbox{ on }  \Gamma,\\[1ex]
			&\bu=\mathbf{0}  & \mbox{ on } \partial\Omega,
		\end{array}
		\right.
	\end{equation}
 where $\jump{\bu}$ is the jump of $\bu$ along the fractures (difference between the traces of $\bu$ on the positive and negative sides), $\jump{\bu}_\normal=\jump{\bu}\cdot\normal^+$ is its normal component, $\gamma_\normal^\pm\bbsigma(\bu)=\bbsigma(\bu)\normal^{\pm}$ are the traces on each side of the fracture, and the normal surface traction is defined as $T_{\normal}(\bu) = \gamma_\normal^+\bbsigma(\bu)\cdot \normal^+$.

To describe the numerical scheme for \eqref{sec6:mode_tresca_1}, we require some modification in the discrete space $\UD$ as in \cite{MR4886366}. Specifically, vertices and faces on the fracture networks can have multiple DOFs attached to them. Denote by $\cells_s$ the set of cells containing the vertex $s$ and by $\faces_\Gamma$ the trace of the mesh over $\Gamma$. The vertex DOFs are indexed by cell-vertex pairs $\mathcal{K}s$ (with $K\in\cells_s$), with $\bv_{\mathcal{K}s}$ denoting the nodal unknown at $s$ on the side of $K$ from the fracture; there is only one such nodal unknown for each side of the fracture around $s$, but it can differ form the nodal unknowns on the other side(s). Similarly, we put one scalar bubble DOF on each side of a fracture face: if $\sigma\in\faces_\Gamma\cap\faces_K$ then $v_{K\sigma}$ is the scalar unknown associated with $\sigma$ on the side of $K$ (there is another, different, unknown $v_{L\sigma}$ if $L$ is the cell on the other side of $K$ from $\sigma$). This is illustrated in Figure \ref{sec6:Dofs_fig}.

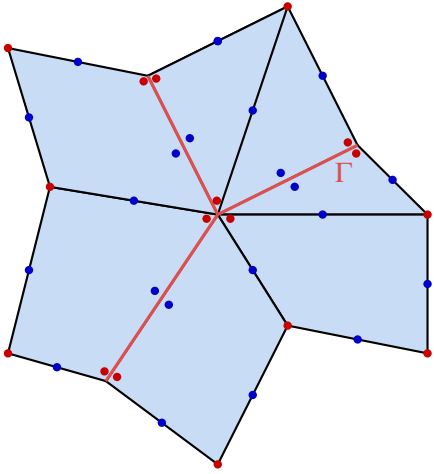
\begin{figure}[H]
  \centering
  \begin{minipage}[t][0pt][t]{0.48\textwidth}
    \vspace{0pt} 
    \begin{tikzpicture}[scale=1.84, baseline={(0,0)}] 
      \definecolor{cellcolor}{RGB}{200, 220, 245}
      \definecolor{fractureRed}{RGB}{220, 80, 80}
      \definecolor{nodeRed}{RGB}{200, 0, 0}
      \definecolor{bubbleBlue}{RGB}{0, 0, 200}
      
      \coordinate (center) at (2, 2);
      
      \draw[thick, fill=cellcolor] (2, 2) -- (3, 2.5) -- (2.5, 3.5) -- (1.5, 3) -- cycle; 
      \draw[thick, fill=cellcolor] (3.5, 2) -- (3, 2.5) -- (2, 2) -- cycle;
      \draw[thick, fill=cellcolor] (2, 2) -- (3.5, 2) -- (3.5, 1) -- (2.5, 1.2) -- cycle; 
      \draw[thick, fill=cellcolor] (2, 2) -- (2.5, 1.2) -- (2, 0.2) -- (1.2, 0.8) -- cycle; 
      \draw[thick, fill=cellcolor] (2, 2) -- (1.2, 0.8) -- (0.5, 1) -- (0.8, 2.2) -- cycle; 
      \draw[thick, fill=cellcolor] (2, 2) -- (0.8, 2.2) -- (0.5, 3.2) -- (1.5, 3) -- cycle; 
      \draw[thick, fill=cellcolor] (2, 2) -- (1.5, 3) -- (2.5, 3.5) -- cycle; 
      
      \draw[very thick, color=fractureRed] (2, 2) -- (3, 2.5);        
      \node[font=\small, color=fractureRed, font=\bfseries] at (2.9, 2.3) {$\Gamma$};
      
      \draw[very thick, color=fractureRed] (2, 2) -- (1.2, 0.8);      
      \draw[very thick, color=fractureRed] (2, 2) -- (1.5, 3);        
      
      \fill[nodeRed] (3-0.07, 2.52) circle (0.03);
      \fill[nodeRed] (3-0.01, 2.5-0.06) circle (0.03);
      \fill[nodeRed] (2.5, 3.5) circle (0.03);
      \fill[nodeRed] (1.5-0.03, 3-0.04) circle (0.03);
      \fill[nodeRed] (1.5+0.06, 3-0.02) circle (0.03);
      \fill[nodeRed] (3.5, 2) circle (0.03);
      \fill[nodeRed] (3.5, 1) circle (0.03);
      \fill[nodeRed] (2.5, 1.2) circle (0.03);
      \fill[nodeRed] (2, 0.2) circle (0.03);
      \fill[nodeRed] (1.2+0.08, 0.8+0.03) circle (0.03);
      \fill[nodeRed] (1.2-0.01, 0.8+0.07) circle (0.03);
      \fill[nodeRed] (0.5, 1) circle (0.03);
      \fill[nodeRed] (0.8, 2.2) circle (0.03);
      \fill[nodeRed] (0.5, 3.2) circle (0.03);
      
      
      


\path let \p1 = (2,2), \p2 = (3,2.5) in
  coordinate (mid1) at ($(\p1)!0.5!(\p2)$);
\coordinate (frac1a) at ($(mid1) + (-0.05,0.05)$);
\coordinate (frac1b) at ($(mid1) + (0.05,-0.05)$);
\fill[bubbleBlue] (frac1a) circle (0.03);
\fill[bubbleBlue] (frac1b) circle (0.03);

\path let \p1 = (2,2), \p2 = (1.2,0.8) in
  coordinate (mid2) at ($(\p1)!0.5!(\p2)$);
\coordinate (frac2a) at ($(mid2) + (-0.05,0.05)$);
\coordinate (frac2b) at ($(mid2) + (0.05,-0.05)$);
\fill[bubbleBlue] (frac2a) circle (0.03);
\fill[bubbleBlue] (frac2b) circle (0.03);

\path let \p1 = (2,2), \p2 = (1.5,3) in
  coordinate (mid3) at ($(\p1)!0.5!(\p2)$);
\coordinate (frac3a) at ($(mid3) + (0.05,0.05)$);
\coordinate (frac3b) at ($(mid3) + (-0.05,-0.06)$);
\fill[bubbleBlue] (frac3a) circle (0.03);
\fill[bubbleBlue] (frac3b) circle (0.03);

      \coordinate (fK1_1) at ($(2,2)!0.5!(2.5, 1.2)$); \fill[bubbleBlue] (fK1_1) circle (0.03);
      
      \coordinate (fK1_1) at ($(1.5,3)!0.5!(0.5, 3.2)$); \fill[bubbleBlue] (fK1_1) circle (0.03);
      
      \coordinate (fK1_1) at ($(0.5, 1)!0.5!(0.8, 2.2)$); \fill[bubbleBlue] (fK1_1) circle (0.03);
      \coordinate (fK1_1) at ($(2, 2)!0.5!(0.8, 2.2)$); \fill[bubbleBlue] (fK1_1) circle (0.03);
      \coordinate (fK1_1) at ($(2, 0.2)!0.5!(1.2, 0.8)$); \fill[bubbleBlue] (fK1_1) circle (0.03);
      \coordinate (fK1_1) at ($(2.5, 1.2)!0.5!(3.5, 1)$); \fill[bubbleBlue] (fK1_1) circle (0.03);
      \coordinate (fK1_1) at ($(2, 2)!0.5!(3.5, 2)$); \fill[bubbleBlue] (fK1_1) circle (0.03);
      \coordinate (fK1_1) at ($(2, 2)!0.5!(2.5, 3.5)$); \fill[bubbleBlue] (fK1_1) circle (0.03);
      \coordinate (fK1_1) at ($(2.5, 3.5)!0.5!(3, 2.5)$); \fill[bubbleBlue] (fK1_1) circle (0.03);
      \coordinate (fK1_1) at ($(2.5,3.5)!0.5!(1.5,3)$); \fill[bubbleBlue] (fK1_1) circle (0.03);
      \coordinate (fK1_1) at ($(3, 2.5)!0.5!(3.5, 2)$); \fill[bubbleBlue] (fK1_1) circle (0.03);
      \coordinate (fK2_1) at ($(3.5,2)!0.5!(3.5,1)$);   \fill[bubbleBlue] (fK2_1) circle (0.03);
      \coordinate (fK3_1) at ($(2.5,1.2)!0.5!(2,0.2)$); \fill[bubbleBlue] (fK3_1) circle (0.03);
      \coordinate (fK4_1) at ($(1.2,0.8)!0.5!(0.5,1)$); \fill[bubbleBlue] (fK4_1) circle (0.03);
      \coordinate (fK5_1) at ($(0.8,2.2)!0.5!(0.5,3.2)$); \fill[bubbleBlue] (fK5_1) circle (0.03);
      \coordinate (fK6_1) at ($(1.5,3)!0.5!(2.5,3.5)$); \fill[bubbleBlue] (fK6_1) circle (0.03);

      \coordinate (node_K2) at (2.09, 1.97);
      \coordinate (node_K4) at (1.92, 1.97);
      \coordinate (node_K5) at (1.993, 2.1);
      \fill[nodeRed] (node_K2) circle (0.03);
      \fill[nodeRed] (node_K4) circle (0.03);
      \fill[nodeRed] (node_K5) circle (0.03);
    \end{tikzpicture}
     \caption{Schematic representation of degrees of freedom in the DDR scheme with face-bubble enrichment. The discrete space $\UD$ consists of vertex displacements and face-bubble corrections.}
    \label{sec6:Dofs_fig}
  \end{minipage}
  \hfill
  \begin{minipage}[t]{0.48\textwidth}
    \vspace{0pt} 
    \textbf{Degree of freedom configuration:}   
   \smallskip
\begin{itemize}[label=\textbullet]
\item \textbf{Fractures~(red lines):}  The geometry $\Gamma$ represents discontinuities~(fractures) in the domain. 

  \item \textbf{Vertex DOFs~(red dots):} Each vertex $s$ carries several displacement degrees of freedom $\mathbf{v}_s \in \mathbb{R}^d$, one for each connected component of $\Omega\backslash\Gamma$ around it.
  
 \item \textbf{Face-bubble DOFs~(blue dots):} Each face $\sigma$ is enriched with a {bubble} degrees of freedom that corrects the normal component of displacement. On fracture faces~(red lines), two {bubble} DOFs are assigned to capture normal flux on each side, and allow for the reconstruction of the jumps across the fracture. On non-fracture faces, a single {bubble} DOF per face provides {locking-free} enrichment.
  \end{itemize}
  \end{minipage}
\end{figure}

With this in mind, the discrete space, incorporating homogeneous Dirichlet boundary conditions on $\partial\Omega$, becomes
\begin{equation*}
\begin{aligned}\UDz = \Big\{\bv_\D={}&((\bv_{\mathcal{K}s})_{K\in\cells,\,s\in\nodes_K},(v_{K\sigma})_{K\in\cells,\,\sigma\in\mathcal F_{K}})\,:
\bv_{\mathcal{K}s}\in\R^d\,,\; v_{K\sigma}\in\R,\\
&\bv_{\mathcal{K}s}=\bv_{\mathcal{L}s}
\mbox{ if $K,L \in\cells_s$ are on the same side of $\Gamma$},\;\; v_{K\sigma}=v_{L\sigma}\mbox{ if $ \sigma \cap\Gamma=\emptyset$},\\
&\bv_{\mathcal{K}s}=\mathbf{0}\mbox{ if $s\in\nodes^{\text{ext}}$},\, v_{K\sigma}=0 \text{ if }\sigma\in \cF^{\text{ext}}
\Big\}.
	\end{aligned}
\end{equation*}
Our approach differs from that in \cite[Section 3.2]{MR4886366} in the sense that {bubble} DOFs are introduced on every face $\sigma \in \cF$, whereas in \cite{MR4886366} they appear only on the fracture network. The approximation space $C_{\D}$ of the Lagrange multiplier ${\Psi}$
is carried out in the same manner as in \cite{MR4886366}, albeit taking into account the absence of friction, namely, we use the space of piecewise constant scalar functions on $\faces_\Gamma$ (representing the negative of normal surface tractions):
$$M_{\D}= \big\{\Psi_{\D} \in L^2\left(  \Gamma \right)\,:\, \Psi_\sigma :=(\Psi_{\D})_{|\sigma}~\text{is constant for all $\sigma\in\faces_\Gamma$} \big\}.$$ 
 We then introduce the discrete dual cone
 $$C_{\D} = \big\{ \Psi_{\D} \in M_{\D} \,:\, \Psi_\D \ge 0\}.$$ 

The interpolator introduced in~\cite[Subsection~3.4]{MR4886366} is also modified to account for the new discrete space~$\UDz$. 
The space $\mathbf{\mathcal{C}}^0_0(\overline{\Omega}\backslash \Gamma)$ is spanned by functions that are continuous on $\overline{\Omega}\backslash\Gamma$, have limits on each side of~$\Gamma$, and vanish on~$\partial\Omega$. The interpolator $\IUD:\mathbf{\mathcal{C}}^0_0(\overline{\Omega}\backslash\Gamma)\to\UDz$ is defined, for any $\bv\in \mathbf{\mathcal{C}}^0_0(\overline{\Omega}\backslash\Gamma)$, by
\begin{equation*}
  \begin{aligned}
  (\IUD \bv )_{\mathcal{K}s} ={}& \bv_{|K}(\mathbf{x}_s)&&\quad\forall K\in\cells\,,\;\forall s \in \nodes_K,\\
  (\IUD \bv)_{K\sigma} ={}& \frac{1}{|\sigma|} \int_{\sigma} ( \gamma^{K\sigma} \bv - \Pi^{K\sigma}(\IUD\bv))\cdot \normal_{K\sigma} &&\quad\forall K\in\cells\,,\;\forall\sigma \in \faces_{K},
  \end{aligned}
\end{equation*}
where the definitions of $\gamma^{K\sigma}$ and $\Pi^{K\sigma}$ follow from those of $\gamma^{\sigma}$ and $\Pi^{\sigma}$, respectively, for each face $\sigma$ of cell $K$.

Let $\sigma\in\cF_\Gamma$ be a fracture face, and $K$~(resp. $L$) is the cell on the positive~(resp. negative) side of $\Gamma$. We define the normal displacement jump operator on $\sigma$ as $\jump{\cdot}_{\sigma,\normal}: \UDz \to \Poly{0}(\sigma)$ such that, for all $\bv_{\D} \in \UDz$,
$$\jump{\bv_{\D}}_{\sigma,\normal}=\left(\overline{\bv}_{K\sigma}-\overline{\bv}_{L\sigma}\right)\cdot\normal_{K\sigma}+\left(v_{K\sigma}-v_{L\sigma}\right),$$ where the definition of $\overline{\bv}_{K\sigma}$~(resp. $\overline{\bv}_{L\sigma}$) follows from \eqref{eq:average.Pisigma}. 
The local jump operators are then patched together to a global, piecewise polynomial operator $\jump{\cdot}_{\D,\normal}: \UDz\to \Poly{0} ( \cF_\Gamma)$, where $\Poly{0}(\cF_\Gamma)$ denotes the space of piecewise constants on $\cF_\Gamma$. This operator satisfies: for all $\bv_{\D}\in \UDz$,
$$\left(\jump{\bv_{\D}}_{\D,\normal}\right)|_{\sigma}=\jump{\bv_{\D}}_{\sigma,\normal}.$$

The numerical scheme corresponding to the mixed variational formulation of the fracture problem is: Find $( \bu_{\D}, {\Psi}_{\D} ) \in \UDz \times C_{\D}$
such that, for all $( \bv_{\D}, {\mu}_{\D} ) \in \UDz \times C_{\D}$,
\begin{subequations}
\begin{align}
& \int_{\Omega} \bbsigma_{\D} (\bu_{\D}): \bbeps_{\D}\left(\bv_{\D} \right)  + \mu_{1} S_{\D}\left( \bu_{\D}, \bv_{\D} \right) + \int_{\Gamma} {\Psi}_{\D}\jump{\bv_{\D}}_{\D,\normal} = \int_\Omega  \mathbf{f} \cdot \wt \Pi^{\D} \bv_{\D}, \label{sec6:eq:meca.var.tresca}  \\[2.5ex]
& \int_{\Gamma} \left( {\mu}_{\D} - {\Psi}_{\D} \right) \jump{\bu_{\D}}_{\D,\normal}  \le  0, \label{sec6:eq:meca.ineq.contact.tresca}
\end{align}
\end{subequations}

The analysis done in Section \ref{sec:proofs} can be combined with the one of \cite{DKMR25} to show that this scheme, combining bubble functions to handle the fractures and to deliver a locking-free method, satisfies the following error estimates.

\begin{theorem}[Abstract error estimate]\label{Sec5:abs_errbd_thm51} Recall the definition of the norm $\NORM{e,\D}{\cdot}$ induced by the inner product defined in \eqref{discrete:energy:ps}. For $(\bu_{\D}, {\Psi}_{\D})$ solution of \eqref{sec6:eq:meca.var.tresca}-\eqref{sec6:eq:meca.ineq.contact.tresca} and $(\bu,{\Psi})$ solution of  mixed variational formulation of \eqref{sec6:mode_tresca_1}, we have the following abstract error estimate:
\begin{align*}
\NORM{e,\D}{\bu_{\D}-\IUD\bu}\lesssim{}& \frac{1}{\sqrt{\mu_{1}}} \mathcal{W}_{\D}(\bbsig(\bu))+ \sqrt{\mu_{2}}C_\D(\bu, \IUD\bu)\notag\\
  &+\left( 
\NORM{L^{2}(\Gamma)}{{\Psi} - \pi^0_{\cF_\Gamma}{\Psi}}\NORM{L^{2}(\Gamma)}{\jump{\IUD\bu}_{\D} - \jump{\bu}}\right)^{1/2},\\
\|{\Psi}_{\D}-{\Psi}\|_{-1/2,\Gamma}\lesssim{}&  \mathcal{W}_{\D}(\bbsig(\bu))+\mu_{2}C_{\D}(\bu, \IUD\bu) +\|\pi^0_{\cF_\Gamma}{\Psi}-{\Psi}\|_{-1/2,\Gamma}\\& +\left(\sqrt{\mu_{2}}+\sqrt{\la}\right)\NORM{e,\D}{\bu_{\D}-\IUD\bu},
\end{align*}
where $\pi^0_{\cF_\Gamma}$ is the orthogonal projection on $\mathbb{P}^0(\faces_\Gamma)$, and the norm $\NORM{-\nicefrac12,\Gamma}{{\cdot}}$ on ${L}^2(\Gamma)$ is defined by: for all ${\Psi} \in {L}^2(\Gamma)$,
 \begin{equation*}
 \NORM{-\nicefrac12,\Gamma}{{\Psi}}
=\sup_{\bv\in \mathbf{H}^1_0(\Omega\setminus\Gamma)\backslash\{\mathbf{0}\}}
\frac{\int_{\Gamma}{\Psi}\jump{\bv}_\normal}{\NORM{\mathbf{H}^1(\Omega\setminus\Gamma)}{\bv}}.
\end{equation*}
\end{theorem}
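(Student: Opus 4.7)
The approach extends the proof of Theorem \ref{abstr.err.est} to the mixed variational inequality setting by combining the adjoint-consistency argument of Section \ref{sec:proof.abstract.error} with a Falk-type treatment of the contact condition, following the strategy of \cite{DKMR25}. First, I set $\bw_\D := \bu_\D - \IUD\bu$ and expand $\NORM{e,\D}{\bw_\D}^2 = \IPbracket{\bu_\D}{\bw_\D}_{e,\D} - \IPbracket{\IUD\bu}{\bw_\D}_{e,\D}$. For the first summand, I substitute using the discrete variational equation \eqref{sec6:eq:meca.var.tresca} to get $\int_\Omega \mathbf{f}\cdot\wt\Pi^\D\bw_\D - \int_\Gamma \Psi_\D \jump{\bw_\D}_{\D,\normal}$. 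For the second, I mimic the argument leading to \eqref{estima.1}: I introduce $\bbsig(\bu)$ against $\bbeps_\D(\bw_\D)$, invoke the commutation property (Lemma \ref{lm:com}, adapted to the cell-wise DOFs of the fractured setting) to cancel the $\lambda$-dependent contribution, and identify the remainder as the adjoint-consistency functional $w_\D(\bbsig(\bu),\bw_\D)$.

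Next, I replace $a(\bu,\cdot)$ through the continuous mixed formulation, which produces $\int_\Omega \mathbf{f}\cdot\bullet - \int_\Gamma \Psi \jump{\bullet}_\normal$. The bulk source contributions telescope against the one coming from \eqref{sec6:eq:meca.var.tresca}, leaving the elasticity residuals controlled by $\mathcal{W}_\D(\bbsig(\bu))$ and $\mu_2\, C_\D(\bu,\IUD\bu)$ (with the $\mu_2$ factor originating in $\bbsig$), plus a residual contact contribution involving $\Psi_\D$, $\Psi$, and the jumps of $\bu_\D$, $\bu$, $\IUD\bu$.

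The main obstacle is handling the contact contribution so as to generate precisely the square-root term in the estimate. To this end, I use the complementarity $\int_\Gamma \Psi \jump{\bu}_\normal = 0$ from the continuous problem and test \eqref{sec6:eq:meca.ineq.contact.tresca} with $\mu_\D = \pi^0_{\cF_\Gamma}\Psi \in C_\D$ (admissible since $\Psi\ge 0$), then insert $\pm\pi^0_{\cF_\Gamma}\Psi$ inside all the multiplier brackets. The $L^2$-orthogonality $\int_\Gamma (\Psi - \pi^0_{\cF_\Gamma}\Psi)\Phi_\D = 0$ for $\Phi_\D \in \Poly{0}(\cF_\Gamma)$ kills the terms where the companion jump is piecewise-constant (i.e., $\jump{\bu_\D}_{\D,\normal}$ and $\jump{\IUD\bu}_{\D,\normal}$ projected away), reducing the contact residual to
$$
\int_\Gamma (\Psi - \pi^0_{\cF_\Gamma}\Psi)\bigl(\jump{\IUD\bu}_{\D,\normal} - \jump{\bu}_\normal\bigr).
$$
Cauchy--Schwarz on $\Gamma$ bounds this by the product $\NORM{L^2(\Gamma)}{\Psi - \pi^0_{\cF_\Gamma}\Psi}\,\NORM{L^2(\Gamma)}{\jump{\IUD\bu}_\D - \jump{\bu}}$; since this quantity is not absorbed into $\NORM{e,\D}{\bw_\D}^2$, it appears under a square root after dividing by $\NORM{e,\D}{\bw_\D}$ and using $ab\le \tfrac12 a^2 + \tfrac12 b^2$, producing the mixed square-root term in the first stated estimate.

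For the multiplier bound, I fix $\bv\in \mathbf{H}^1_0(\Omega\setminus\Gamma)$ and evaluate $\int_\Gamma(\Psi_\D-\Psi)\jump{\bv}_\normal$. I replace $\int_\Gamma \Psi \jump{\bv}_\normal$ using the continuous equation and $\int_\Gamma \Psi_\D \jump{\IUD\bv}_{\D,\normal}$ using the discrete one \eqref{sec6:eq:meca.var.tresca}, then insert $\pm\pi^0_{\cF_\Gamma}\Psi$ to produce the projection error $\NORM{-\nicefrac12,\Gamma}{\pi^0_{\cF_\Gamma}\Psi-\Psi}$ and a bulk residual involving $\bbsig_\D(\bu_\D)-\bbsig(\bu)$ tested against $\bbeps_\D(\IUD\bv)$. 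Bounding the bulk part through $\NORM{e,\D}{\bw_\D}$ forces the Lamé-weighted factor $\sqrt{\mu_2}+\sqrt{\lambda}$ (because $\bbsig$ multiplies $\bbeps$ by $\mu$ and $\divScal$ by $\lambda$, and the energy norm only scales like $\sqrt{\mu_1}$ on $\bbeps_\D$ and $\sqrt{\lambda}$ on $\divScal_\D$), while the remaining residuals contribute $\mathcal{W}_\D(\bbsig(\bu))$ and $\mu_2\,C_\D(\bu,\IUD\bu)$ from the adjoint-consistency/interpolation analysis. Taking the supremum over $\bv$ gives the $H^{-\nicefrac12}$-norm bound. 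The delicate point throughout is that the commutation property keeps $\lambda$ out of the first (displacement) estimate, but in the multiplier estimate $\lambda$ enters unavoidably through the stress tested against $\IUD\bv$.
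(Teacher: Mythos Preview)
Your proposal is correct and follows precisely the route the paper indicates: the paper does not give a detailed proof of this theorem but merely states that it is obtained by combining the analysis of Section~\ref{sec:proofs} (i.e., the argument leading to \eqref{estima.1} together with the commutation property of Lemma~\ref{lm:com}) with the Falk-type treatment of the contact inequality from \cite{DKMR25}. Your outline---expanding $\NORM{e,\D}{\bu_\D-\IUD\bu}^2$, using \eqref{sec6:eq:meca.var.tresca} and the definition of $w_\D$, cancelling the $\lambda$-term via commutation, and then reducing the contact residual through the discrete complementarity, the variational inequality \eqref{sec6:eq:meca.ineq.contact.tresca} tested with $\pi^0_{\cF_\Gamma}\Psi$, and the $L^2$-orthogonality of $\Psi-\pi^0_{\cF_\Gamma}\Psi$ to piecewise constants---is exactly this combination, and your duality argument for the multiplier bound is the expected one.
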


In the following theorem, we denote by ${H}^1(\mathcal{F}_\Gamma)$
the space of functions defined on $\Gamma$ and ${H}^1$ on each $\sigma \in \mathcal{F}_\Gamma$. 

\begin{theorem}[Error estimate]\label{sec6:th:error_estimate}
Let $(\bu,\Psi)$ be the solution to the mixed formulation and assume that 
$\bu \in \mathbf{H}^2(\mathcal{M})$ and $\Psi \in {H}^1(\mathcal{F}_\Gamma)$. 
Then the solution $(\bu_{\D},{\Psi}_{\D})$ of \eqref{sec6:eq:meca.var.tresca}-\eqref{sec6:eq:meca.ineq.contact.tresca}
satisfies the following error estimate:
\begin{align}
\NORM{e,\D}{\bu_{\D}-\IUD\bu}\lesssim{}&
h \left(\frac{\mu_2}{\sqrt{\mu_{1}}}\SEMINORM{H^2(\cells)}{\bu}+\frac{\la}{\sqrt{\mu_{1}}}\SEMINORM{H^1(\cells)}{\divScal\bu} +|{\Psi}|_{H^1(\mathcal{F}_\Gamma)}+ |\jump{\bu}|_{H^1(\mathcal{F}_\Gamma)} \right),\label{thm3:err_est_1}\\
\|{\Psi}_{\D} - {\Psi}\|_{-1/2,\Gamma}\lesssim{}& h \bigg((1+\mu_2)\SEMINORM{H^2(\cells)}{\bu}+\la\SEMINORM{H^1(\cells)}{\divScal\bu} \nonumber+|{\Psi}|_{H^1(\mathcal{F}_\Gamma)}\bigg)\\
&
 +\left(\sqrt{\mu_{2}}+\sqrt{\la}\right)\NORM{e,\D}{\bu_{\D}-\IUD\bu},\label{thm4:err_est_1}
\end{align}
where the coefficient hidden in $\lsim$ is independent of the Lam\'e coefficients $\mu$ and $\la$.  
\end{theorem}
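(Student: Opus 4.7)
The plan is to deduce Theorem~\ref{sec6:th:error_estimate} from the abstract error estimate of Theorem~\ref{Sec5:abs_errbd_thm51} by bounding each of the four consistency-type quantities that appear on the right-hand side in terms of the regularity of $\bu$ and $\Psi$. Since all the bulk operators used here are the same as in the non-fractured scheme, most of the work has already been done in Section~\ref{sec:proofs}; the new ingredients concern only the interface terms on $\Gamma$.

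First, I would control $\mathcal{W}_\D(\bbsigma(\bu))$ and $C_\D(\bu,\IUD\bu)$. These are exactly the quantities bounded in Lemmas~\ref{lem:adjoint.consistency} and~\ref{lem:consistency.nabla}, whose proofs are local and do not use the absence of fractures once one reinterprets cell-vertex DOFs per connected component around $\Gamma$. Applied cell by cell, they yield
\[
C_\D(\bu,\IUD\bu)\lesssim h\SEMINORM{H^2(\cells)}{\bu},
\qquad
\mathcal{W}_\D(\bbsigma(\bu))\lesssim h\bigl(\mu_2\SEMINORM{H^2(\cells)}{\bu}+\la\SEMINORM{H^1(\cells)}{\divScal\bu}\bigr).
\]

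Next, I would handle the two fracture terms. For $\NORM{L^2(\Gamma)}{\Psi-\pi^0_{\faces_\Gamma}\Psi}$, standard $L^2$ polynomial approximation on each fracture face yields a bound of order $h\SEMINORM{H^1(\faces_\Gamma)}{\Psi}$. For $\NORM{L^2(\Gamma)}{\jump{\IUD\bu}_\D-\jump{\bu}}$, the commutation property of Lemma~\ref{lm:com}, together with the definition of the interpolator through the face averages and the relation \eqref{eq:average.Pisigma}, shows that $\jump{\IUD\bu}_\D$ is nothing but the piecewise $L^2$-projection of $\jump{\bu}\cdot\normal^+$ on $\Poly{0}(\faces_\Gamma)$. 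The approximation properties of this projection give an $h\SEMINORM{H^1(\faces_\Gamma)}{\jump{\bu}}$ bound. Plugging everything into the first inequality of Theorem~\ref{Sec5:abs_errbd_thm51} and noting that $\sqrt{h\cdot h}=h$ produces \eqref{thm3:err_est_1}.

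Finally, the estimate \eqref{thm4:err_est_1} follows by substituting \eqref{thm3:err_est_1} into the second inequality of Theorem~\ref{Sec5:abs_errbd_thm51} and controlling $\NORM{-\nicefrac12,\Gamma}{\pi^0_{\faces_\Gamma}\Psi-\Psi}$ through its definition via duality with $\bv\in\mathbf{H}^1_0(\Omega\setminus\Gamma)$: for such $\bv$, a face-wise trace inequality and the $L^2(\Gamma)$ approximation already used give
\[
\int_\Gamma(\pi^0_{\faces_\Gamma}\Psi-\Psi)\jump{\bv}_\normal
\lesssim h\SEMINORM{H^1(\faces_\Gamma)}{\Psi}\NORM{\mathbf{H}^1(\Omega\setminus\Gamma)}{\bv},
\]
which yields the missing $h|\Psi|_{H^1(\faces_\Gamma)}$ contribution. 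The main obstacle I expect is precisely this last step, ensuring that the boundary approximation error of $\Psi$ measured in the $H^{-1/2}$-type norm indeed decays linearly in $h$: it requires using the right trace inequality (without picking up factors depending on the Lamé coefficients) and exploiting that $\jump{\bv}_\normal$ has a well-defined one-sided trace belonging to $H^{1/2}$ on each fracture face. Everything else is bookkeeping of constants to check that none of them depends on $\mu$ or $\la$.
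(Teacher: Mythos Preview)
Your plan is correct and matches the paper's own approach: the paper does not give a detailed proof of this theorem but simply states that ``the analysis done in Section~\ref{sec:proofs} can be combined with the one of \cite{DKMR25}'', which is precisely what you outline---plug the consistency bounds from Lemmas~\ref{lem:consistency.nabla} and~\ref{lem:adjoint.consistency} and the interface approximation estimates into the abstract inequality of Theorem~\ref{Sec5:abs_errbd_thm51}. One small imprecision: you invoke Lemma~\ref{lm:com} to justify that $\jump{\IUD\bu}_{\D,\normal}$ equals $\pi^0_{\faces_\Gamma}(\jump{\bu}_\normal)$, but that lemma concerns the divergence commutation, not the jump; the identity you need follows instead directly from the definition of the face interpolant and~\eqref{eq:average.Pisigma} (the same computation that underlies Lemma~\ref{lm:com}, applied on each side of the fracture face).
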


\begin{remark}[Locking-free estimate]
The regularity estimate \eqref{reg1} was established for pure elasticity on convex domains. For the contact problem, the domain $\Omega \setminus \Gamma$ is non-convex due to the fracture network, and this regularity result then seems out of reach. While we cannot directly apply \eqref{reg1} to guarantee a uniform bound (in terms of $\la$) of the quantity in the right-hand side of the estimate in~\eqref{thm3:err_est_1}, we emphasize that the error estimate itself has a level or robustness for the displacement~($\bu$) in the sense that the coefficient hidden in $\lesssim$ does not explicitly depend on the Lamé coefficient $\la$. In contrast, the estimate for Lagrange multiplier~$(\Psi)$ in \eqref{thm4:err_est_1} deteriorates at rate $\sqrt{\la}$ (which is expected since this Lagrange multiplier represents a normal surface traction that directly depends on the Lamé coefficients).
\end{remark}
\begin{remark}[Extension to contact mechanics with Tresca friction]\label{rem:tresca}
For the Tresca friction model~\cite[Equation~2]{MR4886366}, we need to consider {vector bubble} enrichments of the discrete displacement space (rather than scalar {bubbles}), a distinction necessary to satisfy the inf--sup condition~\cite[Theorem 5.1]{MR4886366}. With this enrichment in place, the analysis in \cite{MR4886366} can be adapted to show that the error bounds in Theorems~\ref{Sec5:abs_errbd_thm51}-\ref{sec6:th:error_estimate} also hold for the contact problem with Tresca friction.
\end{remark}

\section{Numerical Experiments} \label{sec:numerics}
This section presents numerical experiments that validate the theoretical convergence rates and robustness of the proposed numerical scheme \eqref{mixed_discrete} across various parameter regimes. We verify the {locking-free} property by testing with varying Lam\'e coefficients $\la$ ranging from $\la = 1$ to $\la = 10^8$ and $\mu=1$, demonstrating that the numerical method maintains uniform convergence independently of the incompressibility parameter. Two test cases with manufactured solutions are considered on both Voronoi and general tetrahedral meshes,  see Fig.~\ref{Exm2_fig1}.

\subsection{Convergence rate definition}
For all numerical experiments, we measure convergence using the relative error in the $L^2$-norm of the symmetric gradient:
\begin{equation*}
E_{\text{Relative}} = \frac{\NORM{L^2(\Omega)}{\bbeps_{\D}(\bu_\D)- \bbeps_{\D} (\IUD \bu) }}{\NORM{L^2(\Omega)}{\bbeps(\bu)}},
\end{equation*}
where $\bu$ is the exact manufactured solution and $\bu_\D$ is the numerical solution calculated by the proposed discrete scheme~\eqref{mixed_discrete}. The convergence rate is determined by comparing errors across successive mesh refinements via:
\begin{equation*}
\text{Rate} = \frac{\log(E_{\text{Relative}}^{\text{i}} / E_{\text{Relative}}^{\text{i+1}})}{\log(h_{\text{i}} / h_{\text{i+1}})}.
\end{equation*}
According to Theorem \ref{error_estimate}, the expected convergence rate is $\mathcal{O}(h)$. 
\begin{example}\label{exm_1}  
In this example, we consider the domain $\Omega=(0,1)^{3}$. The manufactured solution $\bu$ is taken as 
\begin{equation*}
  \bu =\begin{pmatrix} -2\,\sin(\pi x)\cos(\pi y)\cos(\pi z) \\[0.5em]
  \sin(\pi y)\cos(\pi x)\cos(\pi z) \\[0.5em]
  \sin(\pi z)\cos(\pi x)\cos(\pi y)
  \end{pmatrix}.
\end{equation*}
This choice is particularly challenging, as it examines the method in the nearly incompressible limit $\la \to+\infty$, where standard nodal methods are prone to volumetric locking.  The exact displacement field satisfies $\divScal\, \bu=0$ and the force on the right side $\mathbf{f}$ is calculated to ensure consistency with the exact solution.

Table~\ref{exm1:tab_voro} presents convergence results on Voronoi meshes with varying mesh sizes and Lam\'e coefficients. The numerical data demonstrates the robustness and accuracy of the proposed method. The results show that not only the convergence rates but, actually, the magnitudes of the errors are virtually identical for all tested values of $\la$, which demonstrates the full robustness with respect to the incompressibility parameter and confirms the {locking-free} nature of the formulation. The observed rates consistently match the theoretical prediction of first-order convergence from Theorem~\ref{error_estimate}, with several instances exhibiting mild {superconvergence}. 

\begin{table}[h]
\centering
\caption{Convergence analysis results for Example \ref{exm_1} on Voronoi meshes.}
\label{exm1:tab_voro}
\scriptsize
\begin{tabular}{c|cc|cc|cc}
\hline
\text{MeshSize} & \multicolumn{2}{c|}{$\la = 1$} & \multicolumn{2}{c|}{$\la = 10^3$} & \multicolumn{2}{c}{$\la = 10^6$} \\
\cline{2-7}
& $E_{\text{Relative}}$ & Rate & $E_{\text{Relative}}$ & Rate & $E_{\text{Relative}}$ & Rate \\
\hline
3.053127e-01 & 1.575633e-01 & --- & 1.571255e-01 & --- & 1.571270e-01 & --- \\
2.213817e-01 & 1.135921e-01 & 1.0179 & 1.131257e-01 & 1.0221 & 1.131257e-01 & 1.0221 \\
1.767538e-01 & 8.283524e-02 & 1.4026 & 8.230529e-02 & 1.4128 & 8.230508e-02 & 1.4128 \\
1.500044e-01 & 6.334517e-02 & 1.6348 & 6.288164e-02 & 1.6404 & 6.288141e-02 & 1.6404 \\
1.289197e-01 & 5.151087e-02 & 1.3653 & 5.106443e-02 & 1.3743 & 5.106412e-02 & 1.3743 \\
\hline
\end{tabular}
\end{table}
Table~\ref{exm1:tab_tetcube} presents the convergence results on general tetrahedral meshes. Compared to Voronoi meshes, the tetrahedral meshes exhibit larger errors on coarse grids; however, convergence is unaffected. After the first few refinements, the method achieves stable convergence rates close to 1.0, confirming the expected asymptotic optimality for sufficiently fine meshes. As for Voronoi meshes, the convergence rates and error magnitudes remain virtually independent of the Lam\'e parameter $\la$, demonstrating that the {locking-free} property extends consistently to general tetrahedral discretizations. 
\begin{table}[h]
\centering
\caption{Convergence analysis results for Example \ref{exm_1} on general tetrahedral.}
\label{exm1:tab_tetcube}
\scriptsize
\begin{tabular}{c|cc|cc|cc}
\hline
\text{MeshSize} & \multicolumn{2}{c|}{$\la = 1$} & \multicolumn{2}{c|}{$\la = 10^3$} & \multicolumn{2}{c}{$\la = 10^6$} \\
\cline{2-7}
& $E_{\text{Relative}}$ & Rate & $E_{\text{Relative}}$ & Rate & $E_{\text{Relative}}$ & Rate \\
\hline
5.589426e-01 & 5.087922e-01 & --- & 5.055304e-01 & --- & 5.055289e-01 & --- \\
4.998278e-01 & 4.676925e-01 & 0.7535 & 4.629756e-01 & 0.7866 & 4.629747e-01 & 0.7866 \\
3.920304e-01 & 3.947911e-01 & 0.6976 & 3.917211e-01 & 0.6880 & 3.917200e-01 & 0.6880 \\
3.130676e-01 & 3.130434e-01 & 1.0315 & 3.110712e-01 & 1.0249 & 3.110703e-01 & 1.0249 \\
2.567587e-01 & 2.600056e-01 & 0.9362 & 2.584356e-01 & 0.9349 & 2.584354e-01 & 0.9349 \\
\hline
\end{tabular}
\end{table}
\end{example}

\begin{example}\label{exm2}
We consider the cubic domain $\Omega = (0,1)^3$ and validate the method using a manufactured solution with Lam\'e-parameter dependence. The displacement field $\bu$ is defined as
\begin{equation*}
\bu =\begin{pmatrix}
-2\sin(2\pi x)\cos(2\pi y)\cos(2\pi z) \\[0.5em]
\sin(2\pi y)\cos(2\pi x)\cos(2\pi z) \\[0.5em]
\sin(2\pi z)\cos(2\pi x)\cos(2\pi y)
\end{pmatrix}
+
\frac{1}{\la}
\begin{pmatrix}
\sin(2\pi x) \\[0.5em]
\sin(2\pi y) \\[0.5em]
\sin(2\pi z)
\end{pmatrix}.
\end{equation*}
The difference with the exact solution in Example \ref{exm_1} is that, here, we do not have $\divScal\bu=0$. The magnitude of the discrete displacement $\bu_{\D}$ on Voronoi and general tetrahedral meshes with $\la=1$ is shown in Fig.~\ref{Exm2_fig1}. This choice introduces parameter-dependent behavior, providing a more rigorous test of the method's robustness in the quasi-incompressible limit. We, however, note that the exact solution is designed so that $\la\divScal\bu$~(and the corresponding source term $\mathbf{f}$) remains bounded as $\la\to+\infty$, which allows us to expect still error estimates that do degrade in the quasi-incompressible limit.
Tables~\ref{exm2:tab_voro} and~\ref{exm2:tab_tetcube} summarize the convergence results on Voronoi and general tetrahedral meshes, respectively, with $\la$ ranging from $1$ to $10^{8}$. 

The numerical results demonstrate mild {superconvergence} on Voronoi and optimal first-order convergence on general tetrahedral meshes across all tested $\la$. The errors show a mild dependency on $\la$, but their magnitude remains very little impacted by this coefficient, including in the quasi-incompressible limit. This confirms again the {locking-free} nature of the formulation.

Table~\ref{exm2:tab_voro_without_bubble} shows the convergence behavior of the nodal scheme without {bubble} enrichment, underscoring the crucial role of {bubble} degrees of freedom. As $\la$ increases, accuracy deteriorates severely -- errors surge to $10^5$ for $\la=10^{6}$ and $10^7$ for $\la=10^{8}$ -- clearly revealing {volumetric locking}. In contrast, the proposed method~(Table~\ref{exm2:tab_voro}) maintains $\la$--independent accuracy. These results demonstrate that scalar face-bubble unknowns are essential to ensure {locking-free} robustness by providing sufficient flexibility in the divergence field, without which the method fails in the quasi-incompressible regime.

\begin{figure}[H]
  		\centering
  		\includegraphics[width=6.00cm, height=4.20cm]{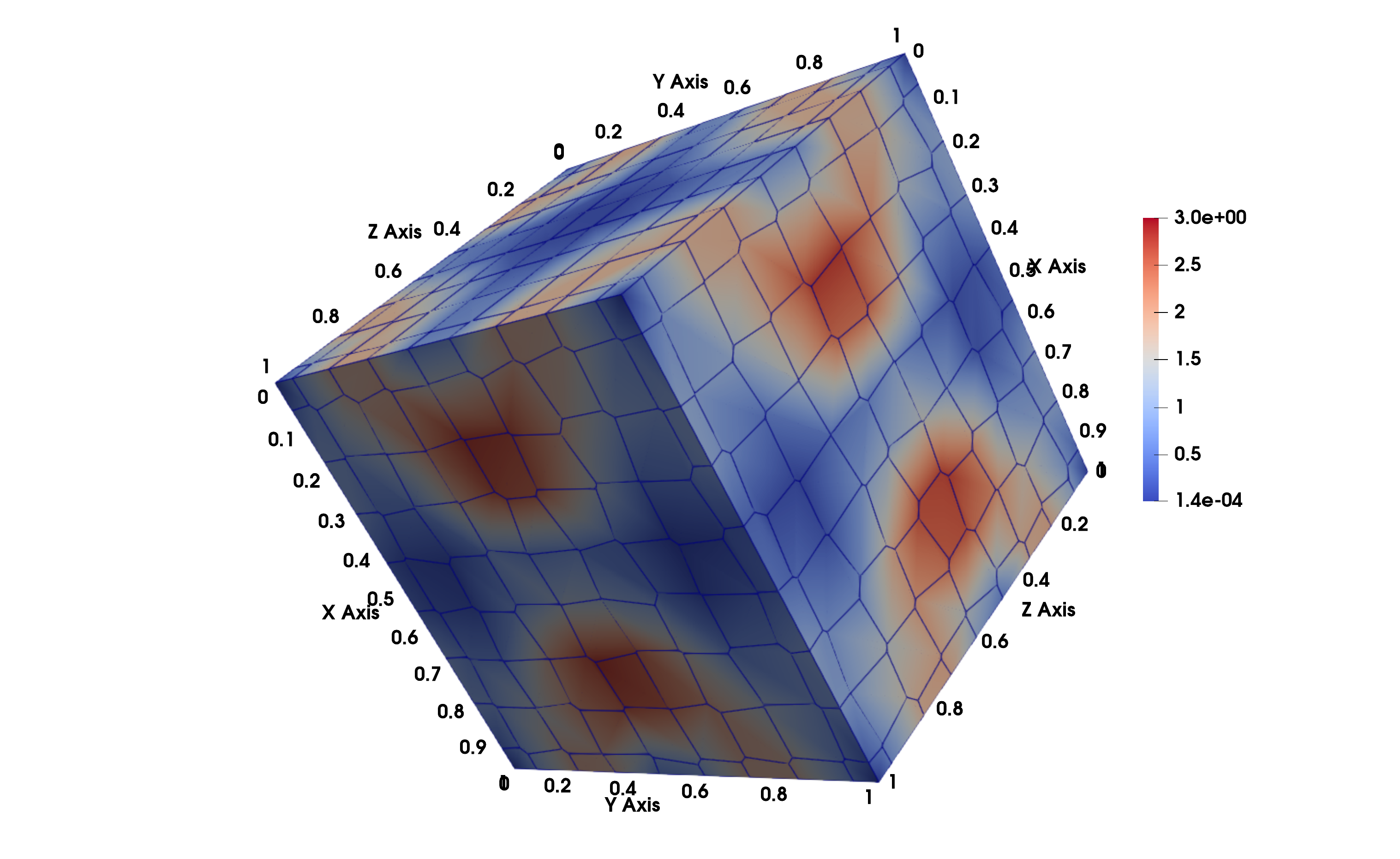}
  		\includegraphics[width=6.00cm, height=4.20cm]{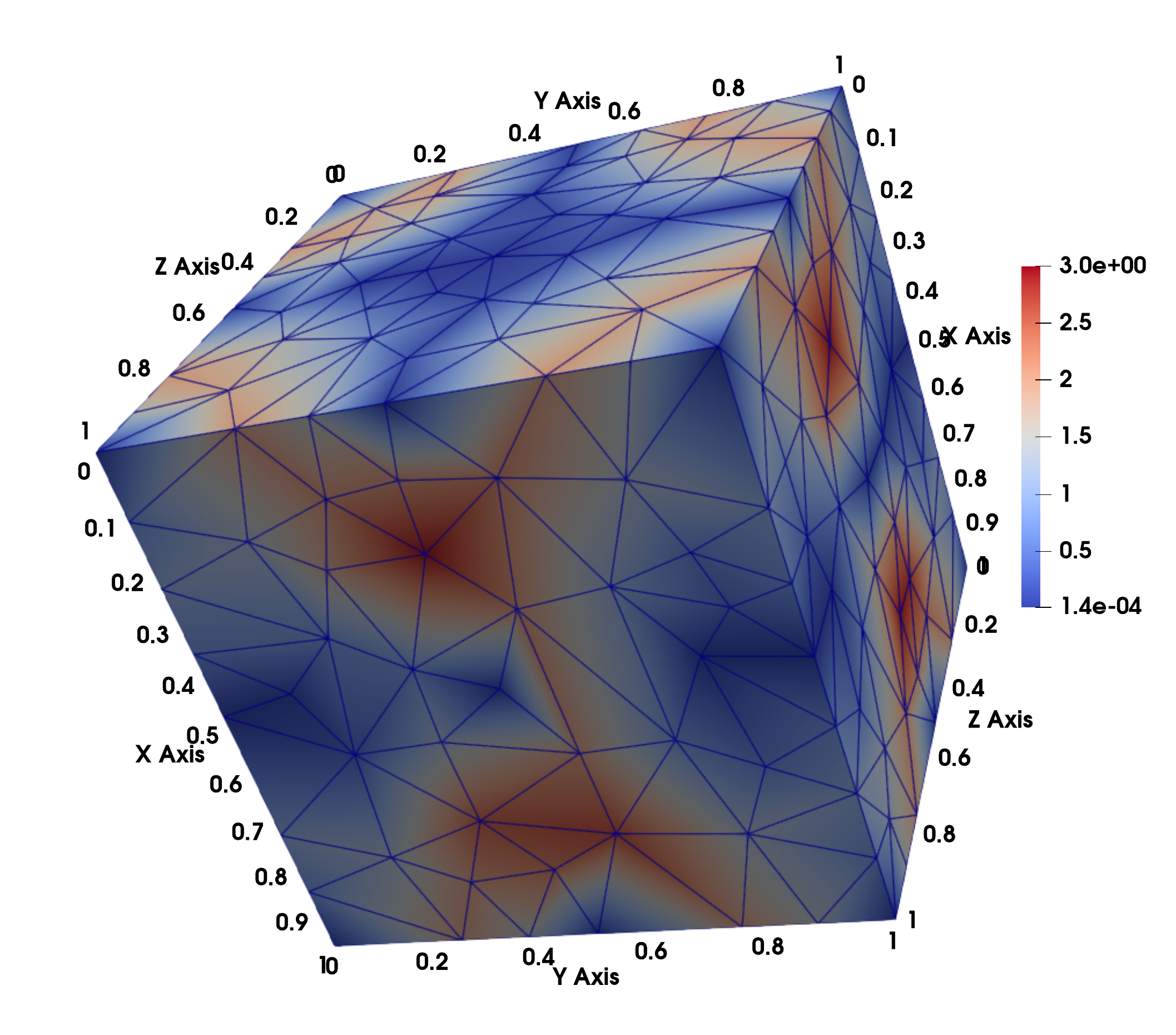}
  	\caption{(Example~\ref{exm2}). Discrete displacement magnitude~$\bu_\D$ on the Voronoi~(left) and tetrahedral~(right) meshes with mesh sizes $h = 0.3130676$ and $h = 0.2213817$, respectively.}
  	\label{Exm2_fig1}
  \end{figure}
\begin{table}[H]
\centering
\caption{Convergence analysis results for Example \ref{exm2} on Voronoi meshes.}
\label{exm2:tab_voro}
\scriptsize
\begin{tabular}{c|cc|cc|cc|cc}
\hline
\text{MeshSize} & \multicolumn{2}{c|}{$\lambda = 1$} & \multicolumn{2}{c|}{$\lambda = 10^3$} & \multicolumn{2}{c|}{$\lambda = 10^6$} & \multicolumn{2}{c}{$\lambda = 10^8$} \\
\cline{2-9}
& $E_{\text{Relative}}$ & Rate & $E_{\text{Relative}}$ & Rate & $E_{\text{Relative}}$ & Rate & $E_{\text{Relative}}$ & Rate \\
\hline
3.053127e-01 & 3.299394e-01 & --- & 4.123398e-01 & --- & 4.123217e-01 & --- & 4.123217e-01 & --- \\
2.213817e-01 & 2.705299e-01 & 0.6176 & 3.549783e-01 & 0.4660 & 3.549652e-01 & 0.4660 & 3.549652e-01 & 0.4660 \\
1.767538e-01 & 2.120576e-01 & 1.0817 & 2.796525e-01 & 1.0594 & 2.796438e-01 & 1.0594 & 2.796438e-01 & 1.0594 \\
1.500044e-01 & 1.704391e-01 & 1.3314 & 2.248553e-01 & 1.3291 & 2.248480e-01 & 1.3291 & 2.248480e-01 & 1.3291 \\
1.289197e-01 & 1.381700e-01 & 1.3857 & 1.812788e-01 & 1.4222 & 1.812725e-01 & 1.4222 & 1.812725e-01 & 1.4222 \\
\hline
\end{tabular}
\end{table}
\begin{table}[H]
\centering
\caption{Convergence analysis results for Example \ref{exm2} on tetrahedral meshes.}
\label{exm2:tab_tetcube}
\scriptsize
\begin{tabular}{c|cc|cc|cc|cc}
\hline
\text{MeshSize} & \multicolumn{2}{c|}{$\lambda = 1$} & \multicolumn{2}{c|}{$\lambda = 10^3$} & \multicolumn{2}{c|}{$\lambda = 10^6$} & \multicolumn{2}{c}{$\lambda = 10^8$} \\
\cline{2-9}
& $E_{\text{Relative}}$ & Rate & $E_{\text{Relative}}$ & Rate & $E_{\text{Relative}}$ & Rate & $E_{\text{Relative}}$ & Rate \\
\hline
5.589426e-01 & 7.531295e-01 & --- & 8.272468e-01 & --- & 8.271323e-01 & --- & 8.271321e-01 & --- \\
4.998278e-01 & 6.431486e-01 & 1.4122 & 7.104305e-01 & 1.3619 & 7.103314e-01 & 1.3619 & 7.103313e-01 & 1.3619 \\
3.920304e-01 & 5.774748e-01 & 0.4434 & 6.613396e-01 & 0.2948 & 6.612501e-01 & 0.2947 & 6.612500e-01 & 0.2947 \\
3.130676e-01 & 4.826357e-01 & 0.7976 & 5.731405e-01 & 0.6364 & 5.730768e-01 & 0.6363 & 5.730767e-01 & 0.6363 \\
2.567587e-01 & 4.035182e-01 & 0.9030 & 4.856101e-01 & 0.8358 & 4.855605e-01 & 0.8358 & 4.855605e-01 & 0.8358 \\
\hline
\end{tabular}
\end{table}  
\end{example}

\begin{table}[H]
\centering
\caption{Convergence analysis results for Example \ref{exm2} on Voronoi meshes.}
\label{exm2:tab_voro_without_bubble}
\scriptsize
\begin{tabular}{c|cc|c|c|c}
\hline
\text{MeshSize} & \multicolumn{2}{c|}{$\lambda = 1$} & $\lambda = 10^3$ & $\lambda = 10^6$ & $\lambda = 10^8$\\
\cline{2-6}
& $E_{\text{Relative}}$ & Rate & $E_{\text{Relative}}$ & $E_{\text{Relative}}$ &  $E_{\text{Relative}}$  \\
\hline
3.053e-01 & 1.5939e+00 & --- & 3.4186e+02 & 3.4158e+05 & 3.4158e+07 \\
2.214e-01 & 1.2082e+00 & 0.86  & 2.7847e+02 & 2.7834e+05 & 2.7834e+07 \\
1.768e-01 & 1.0430e+00 & 0.65  & 2.4181e+02 & 2.4171e+05 & 2.4171e+07 \\
1.500e-01 & 8.9976e-01 & 0.90  & 2.1101e+02 & 2.1092e+05 & 2.1092e+07 \\
1.289e-01 & 8.0420e-01 & 0.74  & 1.9002e+02 & 1.8994e+05 & 1.8994e+07 \\
\hline
\end{tabular}
\end{table}

\begin{example}\label{exm3}
This example extends the proposed methodology to problems involving mixed boundary conditions. Dirichlet conditions are prescribed on $\Gamma = \{ (x,y,z) \in \partial\Omega \mid x = 0 \}$, while Neumann conditions, $\mathbf{\sigma}(\mathbf{u})\mathbf{n} = {\bf 0}$, are imposed on $\partial\Omega \setminus \Gamma$. The manufactured solution is identical to that of Example~\ref{exm2}, defined on the unit cube $\Omega = (0,1)^3$. The corresponding linear elasticity problem reads
\begin{equation*}
\left\{\!\!\!\!
\begin{array}{lll}
& -\div \bbsigma(\bu)= \mathbf{f}  & \mbox{ on } \Omega,\\[1ex]
& \bbsigma(\bu)\normal={\bf 0} & \mbox{ on }  \partial\Omega\setminus \Gamma,\\[1ex]
&\bu={\bf 0} & \mbox{ on } \Gamma.
\end{array}
\right.
\end{equation*}
Table~\ref{exm3:tab_voro} presents convergence results on Voronoi meshes with $\la$ ranging from $1$ to $10^{8}$. The results confirm that the method extends naturally to mixed boundary value problems without modification. Convergence rates remain robust and independent of the incompressibility parameter, achieving optimal first-order rates after sufficient mesh refinement. The stability and accuracy across the full range of $\la$ values substantiate the method's robustness for practical applications involving nearly incompressible elastic materials with diverse boundary conditions.
\begin{table}[H]
\centering
\caption{Convergence analysis results for Example~\ref{exm3} on Voronoi meshes.}
\label{exm3:tab_voro}
\scriptsize
\begin{tabular}{c|cc|cc|cc|cc}
\hline
\text{MeshSize} & \multicolumn{2}{c|}{$\lambda = 1$} & \multicolumn{2}{c|}{$\lambda = 10^3$} & \multicolumn{2}{c|}{$\lambda = 10^6$} & \multicolumn{2}{c}{$\lambda = 10^8$} \\
\cline{2-9}
& $E_{\text{Relative}}$ & Rate & $E_{\text{Relative}}$ & Rate & $E_{\text{Relative}}$ & Rate & $E_{\text{Relative}}$ & Rate \\
\hline
3.053127e-01 & 4.029853e-01 & --- & 5.304031e-01 & --- & 5.303617e-01 & --- & 5.303617e-01 & --- \\
2.213817e-01 & 3.370172e-01 & 0.5561 & 4.680815e-01 & 0.3888 & 4.680581e-01 & 0.3888 & 4.680580e-01 & 0.3888 \\
1.767538e-01 & 2.748554e-01 & 0.9056 & 3.775619e-01 & 0.9546 & 3.775393e-01 & 0.9546 & 3.775393e-01 & 0.9546 \\
1.500044e-01 & 2.258388e-01 & 1.1970 & 3.077235e-01 & 1.2464 & 3.077034e-01 & 1.2465 & 3.077034e-01 & 1.2465 \\
1.289197e-01 & 1.868435e-01 & 1.2514 & 2.502349e-01 & 1.3653 & 2.502159e-01 & 1.3653 & 2.502159e-01 & 1.3653 \\
\hline
\end{tabular}
\end{table}
\end{example}

\begin{remark}[Superconvergence on Voronoi meshes]
Mild {superconvergence} is observed in Tables~\ref{exm1:tab_voro}, \ref{exm2:tab_voro}, and \ref{exm3:tab_voro} on Voronoi meshes, where the numerical solution exhibits a convergence rate slightly higher than the theoretically first-order estimate. Compared with tetrahedral meshes, Voronoi meshes contain a larger number of faces. Since the proposed scheme enriches the discrete space by introducing one additional scalar (normal) degree of freedom per face -- acting as a higher-order correction to the underlying low-order formulation -- this enrichment across differently oriented faces incorporates extra geometric information into the approximation. Consequently, the method attains greater flexibility and is able to capture higher-order components of the solution more accurately, leading to enhanced convergence. This effect is less pronounced on tetrahedral meshes, which, for a given mesh size, possess fewer faces, thus yielding only the optimal convergence rate.
\end{remark}

\section{Conclusions}
This work provides a robust and flexible numerical framework for elastic problems that successfully eliminates volumetric locking while maintaining ease of implementation and computational efficiency. The theoretical foundation through optimal error estimates, combined with convincing numerical evidence, establishes the proposed method as a reliable tool for engineering analyses involving nearly incompressible materials. The polytopal nature of the approach positions it well for future extensions to more complex problem classes and heterogeneous media.

\section*{Acknowledgement}
Funded by the European Union (ERC Synergy, NEMESIS, project number 101115663). Views and opinions expressed are, however, those of the authors only and do not necessarily reflect those of the European Union or the European Research Council Executive Agency. Neither the European Union nor the granting authority can be held responsible for them.
%
\printbibliography
\end{document}